\documentclass[12pt]{article}

\usepackage[utf8]{inputenc}
\usepackage[T1]{fontenc}
\usepackage{amsmath, amsfonts, amssymb, mathrsfs, amscd}
\usepackage{mathtools}
\usepackage{amsthm}
\usepackage[margin=2.5cm]{geometry}
\usepackage[unicode, colorlinks, linkcolor=blue, citecolor=blue, urlcolor=blue]{hyperref}
\usepackage{enumerate}
\usepackage{bm}

\numberwithin{equation}{section}

\theoremstyle{plain}
\newtheorem{theorem}{Theorem}
\newtheorem{lemma}{Lemma}

\newtheorem{corollary}{Corollary}
\newtheorem{thmx}{Theorem}

\theoremstyle{definition}

\newtheorem{remark}{Remark}

\title{%
  Estimate of the Rate of Convergence of Fourier Sums \\[0.3em]
  for Functions from Lebesgue Classes on a Set of Full Measure%
  \thanks{%
    This is a preprint of a manuscript submitted to \emph{Izvestiya: Mathematics}.
  }%
}

\author{%
  D.\,I.~Masyutin%
  \thanks{%
    N.N. Krasovskii Institute of Mathematics and Mechanics,
    Ural Branch of the Russian Academy of Sciences,
    Yekaterinburg, Russia.
    \href{mailto:newselin@mail.ru}{newselin@mail.ru}
  }%
}

\date{}

\begin{document}

\maketitle

\begin{abstract}
We obtain an estimate of the rate of convergence on a set of full measure
of partial sums of trigonometric Fourier series of functions from Lebesgue
classes and construct a counterexample showing the order sharpness of this
estimate. We derive a condition for Prinsheim convergence almost everywhere
of two-dimensional trigonometric Fourier series of functions from Lebesgue
classes in terms of the modulus of continuity.

\textbf{Keywords:} trigonometric Fourier series, rate of approximation,
convergence almost everywhere, Prinsheim convergence.

\textbf{AMS 2020 Mathematics Subject Classification:} 42A20, 42B05.
\end{abstract}

\section{Introduction}

Let $\mathbb{N}, \mathbb{Z}, \mathbb{Z}_+, \mathbb{R}$ be the sets of natural,
integer, non-negative integer, and real numbers, respectively,
$p \in [1, +\infty), \; d\in\{1, 2\}$,
and let $\mathbb{T}^d := [-\pi, \pi)^d$ be the $d$-dimensional torus.
We denote by $L^p(\mathbb{T}^d)$ the linear normed space of Lebesgue-measurable
functions $f: \mathbb{T}^d \rightarrow \mathbb{R}$ with norm
\begin{equation*}
\|f\|_p := \left(\int\limits_{\mathbb{T}^d} |f(x)|^p \, dx \right)^{1/p}.
\end{equation*}

We introduce the modulus of continuity of a function $f$ in $L^p(\mathbb{T}^d)$:
\begin{equation*}
\omega(f, \delta)_p := \sup\limits_{h\in\mathbb{R}^d: |h| \leqslant \delta}
\| f(\cdot+h) - f \|_p,
\end{equation*}
where $|h|$ is the usual Euclidean norm in $\mathbb{R}^d$.

Let $d=1$, $n \in \mathbb{Z}_+$, and let $S_{n}(f, x)$ be the $n$-th partial sum
of the trigonometric Fourier series of $f$ at the point $x \in \mathbb{T}$.
For a Lebesgue-measurable subset $E \subset \mathbb{T}$ we denote by
$m(E)$ its one-dimensional Lebesgue measure.

For continuous $2\pi$-periodic functions, A.~Lebesgue~\cite{Lebesgue}
established the following uniform inequality
(see, e.g.,~\cite[Ch.~4, \S~3]{Bari}):
\begin{equation}\label{Lebesgue_est}
|f(x) - S_n(f, x)| \leqslant C E_n(f) \ln{(n+3)},
\end{equation}
where $E_n(f)$ is the best approximation of $f$ by trigonometric polynomials
of degree at most $n$ in the space of continuous functions,
and $C$ is an absolute constant.

In 1974, K.\,I.~Oskolkov~\cite[Theorem~1]{Oskolkov1} showed that if the
deviation of a function from its Fourier sums is estimated not at every point
$x \in \mathbb{T}$, but almost everywhere, then Lebesgue's estimate
\eqref{Lebesgue_est} can be essentially improved; namely, for all
$n \in \mathbb{Z}_+$ the following estimate holds:
\begin{equation}\label{Oskolkov_eq}
|f(x) - S_n(f, x)| \leqslant C(x) E_n(f)
\ln{\ln{\left(\frac{8 E_0(f)}{E_n(f)}\right)}},
\end{equation}
where $C(x)$ is a non-negative almost everywhere finite function.
Moreover, there exists a constant $A > 0$ such that for all $y > 0$,
\begin{equation*}
m\{x \in \mathbb{T}: C(x) > y\} \leqslant Ae^{-y/A}.
\end{equation*}

In the present paper we obtain an analogue of estimate~\eqref{Oskolkov_eq}
for the rate of approximation by partial sums of trigonometric Fourier series
of functions from Lebesgue classes. Let $E_n(f)_p$ denote the best
approximation of $f$ by trigonometric polynomials of degree at most $n$
in the space $L^p(\mathbb{T})$.

\begin{theorem}\label{theorem1}
Let $\varphi: [0, +\infty) \rightarrow [0, +\infty)$ be an increasing
function such that
\begin{equation}\label{condition}
\sum\limits_{k=1}^{\infty} \frac{1}{k\varphi(k)} < +\infty.
\end{equation}
Let $1 < p < \infty$ and let $f \in L^p(\mathbb{T})$ be not a trigonometric
polynomial. Then for every $n \in \mathbb{Z}_+$ the estimate
\begin{equation}\label{estimate}
|f(x) - S_n(f, x)| \leqslant C(x) E_n(f)_p
\varphi^{1/p}\left(\frac{E_0(f)_p}{E_n(f)_p}\right)
\end{equation}
holds, where $C(x) \in L^p(\mathbb{T})$ and
\begin{equation}\label{norm_C}
\|C\|_p^p \leqslant C_p \sum\limits_{k=1}^{\infty} \frac{1}{k\varphi(k)},
\end{equation}
with $C_p$ a positive constant depending only on $p$.
\end{theorem}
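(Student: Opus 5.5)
Throughout, write $E_n:=E_n(f)_p$; these numbers are non-increasing and tend to $0$. Since $f$ is not a trigonometric polynomial, $E_n>0$ for every $n$. The plan is to define $C(x)$ as a supremum over $n$ of the normalized deviations, bound $\|C\|_p^p$ by a sum over dyadic levels of $E_n$, and control each level with the Carleson--Hunt maximal inequality. Concretely, set
\begin{equation*}
C(x):=\sup_{n\in\mathbb{Z}_+}\frac{|f(x)-S_n(f,x)|}{E_n\,\varphi^{1/p}(E_0/E_n)} .
\end{equation*}
With this choice the estimate \eqref{estimate} holds by definition, and what remains is the bound \eqref{norm_C}.

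First, group the indices $n$ into dyadic levels of the ratio $E_0/E_n$. For $j\in\mathbb{Z}_+$ let
\begin{equation*}
N_j:=\{n:\ 2^{j}\leqslant E_0/E_n<2^{j+1}\}.
\end{equation*}
Each $N_j$ is an interval of consecutive integers, possibly empty or infinite. If it is nonempty, let $m_j:=\min N_j$ and let $T_j$ be a polynomial of best approximation of degree $m_j$, so that $\|f-T_j\|_p=E_{m_j}\leqslant E_0 2^{-j}$. For $n\in N_j$ we have $n\geqslant m_j$, hence $S_n(T_j)=T_j$ and
\begin{equation*}
f-S_n(f)=(f-T_j)-S_n(f-T_j).
\end{equation*}
This gives
\begin{equation*}
\sup_{n\in N_j}|f(x)-S_n(f,x)|\leqslant |g_j(x)|+S^*g_j(x),\qquad g_j:=f-T_j,
\end{equation*}
where $S^*$ is the Carleson maximal operator. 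The Carleson--Hunt theorem gives $\|S^*g\|_p\leqslant A_p\|g\|_p$ for $1<p<\infty$. For $n\in N_j$ we also have $E_n>E_0 2^{-j-1}$, and since $\varphi$ is increasing, $\varphi(E_0/E_n)\geqslant\varphi(2^j)$.

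Next, bound the $p$-th power of the supremum by the sum over levels:
\begin{equation*}
C(x)^p\leqslant\sum_j\frac{(|g_j|+S^*g_j)^p}{(E_0 2^{-j-1})^p\,\varphi(2^j)} .
\end{equation*}
Integrating and using $\|g_j\|_p\leqslant E_0 2^{-j}$ yields
\begin{equation*}
\|C\|_p^p\leqslant (1+A_p)^p\,2^p\sum_{j\geqslant0}\frac{1}{\varphi(2^j)} .
\end{equation*}
Finally, Cauchy condensation converts this to the stated form. Because $\varphi$ is increasing,
\begin{equation*}
\sum_{k=2^{j-1}}^{2^j-1}\frac1{k\varphi(k)}\geqslant\frac{2^{j-1}}{2^{j}\varphi(2^j)}=\frac{1}{2\varphi(2^j)}\qquad (j\geqslant1),
\end{equation*}
so $\sum_{j\geqslant1}1/\varphi(2^j)\leqslant2\sum_{k\geqslant1}1/(k\varphi(k))$. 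The term $j=0$ needs separate care. Monotonicity gives $1/\varphi(1)\leqslant1/\varphi(1)$, and $1/\varphi(1)$ is exactly the $k=1$ term of $\sum_k 1/(k\varphi(k))$, so $1/\varphi(1)\leqslant\sum_k 1/(k\varphi(k))$. (If $\varphi(1)=0$ the series in \eqref{condition} diverges, so this case does not arise.) Together these bounds give \eqref{norm_C} with $C_p=3(1+A_p)^p2^p$.

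The only substantive ingredient is the Carleson--Hunt $L^p$ bound for the maximal partial-sum operator. The main point needing care is the choice of a single approximant $T_j$ per level with $\deg T_j\leqslant n$ for every $n$ in that level. This is why $m_j$ is taken as the smallest index of the level: it makes $S_n(T_j)=T_j$ for all $n\in N_j$. An infinite $N_j$ is also harmless, since only finitely many levels can be infinite (in fact at most one, when $E_n$ stabilizes), and the argument is unchanged in that case. The remaining step, that $C$ is measurable, is routine because it is a countable supremum of measurable functions.
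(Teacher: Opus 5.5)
Your proposal is correct and is essentially the paper's proof: dyadic blocks of the ratio $E_0(f)_p/E_n(f)_p$, one best approximant $T_j$ of the smallest degree in each block so that $f-S_n(f)=g_j-S_n(g_j)$ on that block, Hunt's maximal inequality applied to $g_j$, and summation to $\sum_j 1/\varphi(2^j)$. The differences are cosmetic. You dominate the supremum directly by the sum over blocks instead of using the paper's $M_n$ and monotone convergence, you use $|g_j|+S^*g_j$ rather than $|g_j|\leqslant S^*g_j$ a.e., and you write out the Cauchy condensation step that the paper leaves implicit. Also, since $E_n(f)_p\to0$, no block $N_j$ is ever infinite, so your remark about that case is moot.
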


We note that in 1985 K.\,I.~Oskolkov~\cite[Theorem~2]{Oskolkov3} obtained,
in particular, an estimate for functions $f \in L(\mathbb{T})$. In our
notation it can be written as
\begin{equation*}
|f(x) - S_n(f, x)| \leqslant C(x) E_n(f)_1
\varphi \left(\frac{E_0(f)_1}{E_n(f)_1} \right) \ln{n},
\end{equation*}
where $\varphi: [0; +\infty) \rightarrow [0; +\infty)$ is an increasing
function satisfying~\eqref{condition}, and $C(x)$ is almost everywhere finite.
Moreover, there exists a constant $A>0$ such that for all $y>0$,
\begin{equation*}
m\{x \in \mathbb{T}: C(x) > y\} \leqslant \frac{A}{y}.
\end{equation*}

As is well known, Jackson's inequality in $L^p(\mathbb{T})$ holds
(see~\cite{Quade} and also~\cite[Addenda, \S~7]{Bari}): for any $p \geqslant 1$
there exists a constant $C_p>0$ such that for any $f \in L^p(\mathbb{T})$
and $n \in \mathbb{N}$,
\begin{equation*}
E_n(f)_p \leqslant C_p \omega\left(f, \frac{1}{n}\right)_p.
\end{equation*}
From Theorem~\ref{theorem1} and Jackson's inequality we obtain the following.

\begin{corollary}\label{corollary}
Let $1 < p < \infty$, $f \in L^p(\mathbb{T})$, and let
$\varphi: [0, +\infty) \rightarrow [0, +\infty)$ be an increasing function
satisfying condition~\eqref{condition}. Suppose that in some neighbourhood
of zero the function $x \varphi^{1/p}(1/x)$ is increasing. Then there exists
$n_0 \in \mathbb{N}$ such that for $n \geqslant n_0$,
\begin{equation*}
|f(x) - S_n(f, x)| \leqslant C(x)
\omega\left(f, \frac{1}{n}\right)_p
\varphi^{1/p}\left(\frac{1}{\omega\left(f, \frac{1}{n}\right)_p}\right),
\end{equation*}
where $C(x) \in L^p(\mathbb{T})$ and its norm satisfies~\eqref{norm_C}.
\end{corollary}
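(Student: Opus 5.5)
Corollary~\ref{corollary} follows from Theorem~\ref{theorem1} once we turn the quantity $E_n(f)_p\,\varphi^{1/p}(E_0(f)_p/E_n(f)_p)$ into one involving $\omega(f,1/n)_p$. The key is the monotonicity of $\psi(t):=t\,\varphi^{1/p}(1/t)$ near zero.

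First we dispose of the trivial case. If $f$ is a trigonometric polynomial of degree $N$, then $S_n(f,x)=f(x)$ for $n\geqslant N$, and the inequality holds with any $C$ once $n_0>N$. So assume $f$ is not a polynomial. Then $E_n(f)_p>0$ for all $n$, and Theorem~\ref{theorem1} gives
\begin{equation*}
|f(x)-S_n(f,x)|\leqslant C(x)\,E_n(f)_p\,\varphi^{1/p}\!\left(\frac{E_0(f)_p}{E_n(f)_p}\right),
\end{equation*}
with $C\in L^p$ satisfying \eqref{norm_C}.

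Next we normalize. Put $E_0:=E_0(f)_p>0$ and write the right-hand factor as $E_0\,\psi(E_n(f)_p/E_0)$. Since $\omega(f,1/n)_p\to0$ and Jackson's inequality gives $E_n(f)_p/E_0\leqslant C_p\,\omega(f,1/n)_p/E_0$, for $n\geqslant n_0$ both $E_n(f)_p/E_0$ and $C_p\omega(f,1/n)_p/E_0$ lie in the neighbourhood where $\psi$ is increasing. Monotonicity then yields
\begin{equation*}
E_n(f)_p\,\varphi^{1/p}\!\left(\frac{E_0}{E_n(f)_p}\right)\leqslant C_p\,\omega(f,1/n)_p\,\varphi^{1/p}\!\left(\frac{E_0}{C_p\,\omega(f,1/n)_p}\right).
\end{equation*}

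The main obstacle is removing the constant $E_0/C_p$ inside the argument of $\varphi$, since the target has $\varphi^{1/p}(1/\omega)$.
\begin{itemize}
\item If $E_0/C_p\leqslant1$, we are done, because $\varphi$ is increasing.
\item Otherwise we use a rescaling trick. Put $\tilde\varphi(t):=\varphi(at)$ with $a=\max(1,E_0/C_p)$. The sum $\sum 1/(k\tilde\varphi(k))$ is comparable to $\sum 1/(k\varphi(k))$, by a Cauchy condensation type comparison since $\varphi$ is monotone. However, this introduces dependence on $f$ through $a$.
\end{itemize}
The cleaner route is to apply Theorem~\ref{theorem1} to the normalized function $g=f/\lambda$, where $\lambda$ is chosen so that $E_0(g)_p\leqslant C_p$. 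Both sides of the target inequality scale, though not homogeneously, because of the factor $\varphi^{1/p}(1/\omega)$. So normalization has to be combined with the comparison $\varphi(ak)$ versus $\varphi(k)$. I would first try the simple case $E_0\leqslant C_p$ (or absorb it by enlarging $C_p$ in Jackson's inequality, which is legitimate since Jackson holds for any larger constant) and then verify that this yields the stated bound with $\|C\|_p^p$ controlled by \eqref{norm_C}.

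Concretely, replacing the Jackson constant $C_p$ by $C_p':=\max(C_p,E_0)$ makes the ratio $E_0/C_p'\leqslant1$. The multiplicative factor $C_p'$ then goes into $C(x)$, so the bound on its norm picks up a factor depending on $E_0$. I expect the intended reading is that such constants are absorbed, so the proof amounts to the monotonicity step above.
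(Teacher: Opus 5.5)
Your monotonicity-plus-Jackson step is correct, and it is the argument the paper has in mind. The paper only says the Corollary follows from Theorem~\ref{theorem1} and Jackson's inequality, and its Remark takes $n_0$ with $C_p\omega(f,1/n_0)_p\leqslant A$.

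\textbf{The gap is how you dispose of $E_0(f)_p$.} Your final choice $C_p'=\max(C_p,E_0(f)_p)$ only gives
\[
\|C\|_p^p\leqslant \max\bigl(C_p,E_0(f)_p\bigr)^p\, C_p\sum_k \frac{1}{k\varphi(k)}.
\]
This bound depends on $f$ and blows up as $E_0(f)_p\to\infty$. It is not \eqref{norm_C}, whose constant depends only on $p$, and it cannot be ``absorbed'' by reinterpretation. The rescaling digression also fails, because it runs in the wrong direction. To turn $\varphi^{1/p}(a/\omega)$ into $\varphi^{1/p}(1/\omega)$ with $a>1$, you would need Theorem~\ref{theorem1} for $\varphi(\cdot/a)$. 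Its series $\sum 1/(k\varphi(k/a))$ is larger than $\sum 1/(k\varphi(k))$, and the excess is controlled only by $a$-dependent constants, since $\varphi$ can be arbitrarily small on $[0,1)$.

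\textbf{The missing idea: $E_0$ can be eliminated entirely, because $n_0$ may depend on $f$.}
\begin{enumerate}
\item Choose $N$ with $E_N(f)_p\leqslant 1$ and let $T_N$ be a polynomial of best $L^p$-approximation of degree $N$.
\item Apply Theorem~\ref{theorem1} to $g:=f-T_N$, which is still not a polynomial. Then $E_0(g)_p\leqslant\|g\|_p=E_N(f)_p\leqslant 1$.
\item For $n\geqslant N$ you have $E_n(g)_p=E_n(f)_p$ and $g-S_n(g)=f-S_n(f)$. Hence, for $n\geqslant N$,
\[
|f(x)-S_n(f,x)|\leqslant C_g(x)\,E_n(f)_p\,\varphi^{1/p}\bigl(1/E_n(f)_p\bigr)=C_g(x)\,\psi\bigl(E_n(f)_p\bigr),
\]
with $\|C_g\|_p^p$ bounded by a $p$-only multiple of $\sum_k 1/(k\varphi(k))$. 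Equivalently, you could rerun the proof of Theorem~\ref{theorem1} with $n_k$ defined through $E_n(f)_p\geqslant 2^{-k}$ instead of $2^{-k}E_0(f)_p$.
\item Take the Jackson constant $C_p\geqslant 1$, which is allowed. Take $n_0\geqslant N$ with $C_p\,\omega(f,1/n_0)_p\leqslant A$. For $n\geqslant n_0$, both $E_n(f)_p$ and $C_p\omega(f,1/n)_p$ lie in $(0,A]$.
\item Your monotonicity step then gives
\[
\psi\bigl(E_n(f)_p\bigr)\leqslant \psi\bigl(C_p\omega(f,1/n)_p\bigr)\leqslant C_p\,\omega(f,1/n)_p\,\varphi^{1/p}\bigl(1/\omega(f,1/n)_p\bigr).
\]
\end{enumerate}
So $C:=C_pC_g$ satisfies \eqref{norm_C} with a constant depending only on $p$. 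This is consistent with the paper's Remark, whose threshold for $n_0$ involves no $E_0(f)_p$.
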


\begin{remark}
If $x \varphi^{1/p}(1/x)$ is increasing on $(0, A]$, then in Corollary~\ref{corollary}
one can take $n_0$ such that $C_p \omega\left(f, 1/n_0 \right)_p \leqslant A$.
\end{remark}

It should be noted that a function from $L^p(\mathbb{T})$ with sufficiently
rapidly decreasing best approximations coincides almost everywhere with some
continuous function on $\mathbb{T}$ (see~\cite[Theorem~5 and Theorem~F]{Ulyanov}).
Therefore, from Lebesgue's estimate for continuous functions~\eqref{Lebesgue_est}
and the Konyushkov--Stechkin inequality
(see~\cite[Theorem~2]{KonStech})
\begin{equation*}
E_n(f) \leqslant A_p \left[E_n(f)_p(n+1)^{1/p} +
\sum\limits_{k=n+1}^{\infty} k^{1/p-1} E_k(f)_p \right]
\end{equation*}
we obtain the uniform Lebesgue inequality for functions in $L^p(\mathbb{T})$:
for almost all $x \in \mathbb{T}$,
\begin{equation}\label{Lebesque_Lp}
|f(x) - S_n(f, x)| \leqslant C A_p
\left[E_n(f)_p(n+1)^{1/p} +
\sum\limits_{k=n+1}^{\infty} k^{1/p-1} E_k(f)_p \right] \ln{n}.
\end{equation}
For example, if $E_n(f)_p = e^{-n^{\alpha}}, \alpha > 0$, then estimate
\eqref{estimate} does not give essential improvements over
\eqref{Lebesque_Lp}. However, if $E_n(f)_p$ decreases not so rapidly,
then estimate~\eqref{estimate} is more precise for almost all $x \in \mathbb{T}$,
and in this case~\eqref{estimate} is sharp, as stated in the following theorem.

\begin{theorem}\label{theorem2}
Let $\varphi: [0, +\infty) \rightarrow [0, +\infty)$ be an increasing function
such that condition~\eqref{condition} fails and
$\varphi(x) = o(x^{\alpha})$ as $x \to \infty$ for every $\alpha>0$. Let $\{ \varepsilon_n \}_{n=0}^{\infty}$ be a sequence of positive numbers
monotonically tending to zero such that for some positive $q$ the sequence
$\{n^q \varepsilon_n \}_{n=0}^{\infty}$ is non-decreasing. Then for every $1 < p < \infty$ there exists $F \in L^p(\mathbb{T})$ such that
\begin{equation}\label{E_n(f)_cond}
E_n(F)_p \leqslant \varepsilon_n,
\end{equation}
and for almost all $x \in \mathbb{T}$,
\begin{equation}\label{limsup}
\varlimsup\limits_{n \to \infty}
\frac{|F(x) - S_n(F, x)|}{\varepsilon_n
\varphi^{1/p}(\varepsilon_0/\varepsilon_n)} = +\infty.
\end{equation}
\end{theorem}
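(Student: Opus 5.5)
We construct $F$ as a lacunary sum of normalized blocks, each nearly saturating the $L^p$ bound for the deviation of its Fourier partial sums. Because the blocks are spread over widely separated dyadic frequency ranges, their effects at a point can be treated as nearly independent.

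\emph{Choice of scales and coefficients.} Choose a rapidly increasing sequence $n_1<n_2<\dots$, say $n_{k+1}\geqslant n_k^2$. Set
\begin{equation*}
F=\sum_k a_k g_k,
\end{equation*}
where $g_k$ is a trigonometric polynomial with spectrum in $(n_k,2n_k]$ and $\|g_k\|_p\asymp 1$. The coefficients are $a_k\asymp \varepsilon_{n_k}$, adjusted so that $\sum_{j\geqslant k}a_j\lesssim\varepsilon_{n_k}$. This adjustment is possible because $\varepsilon_n$ decays at most polynomially: $n^q\varepsilon_n$ is nondecreasing, so $\varepsilon_{n_{k+1}}\geqslant (n_k/n_{k+1})^q\varepsilon_{n_k}$. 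We choose the $n_k$ sparse enough that $\varepsilon_{n_{k+1}}\leqslant \varepsilon_{n_k}/2$. Then $E_n(F)_p\leqslant\sum_{k:2n_k>n}a_k\|g_k\|_p$, and for $n\in[2n_{k-1},2n_k)$ this tail is $\lesssim \varepsilon_{n_k}\leqslant\varepsilon_n$. At $n\geqslant n_{k-1}$ near the block, monotonicity together with the polynomial-decay condition keeps the ratio bounded, so after rescaling, \eqref{E_n(f)_cond} holds.

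\emph{The blocks.} Let $N_k=\varphi(\varepsilon_0/\varepsilon_{n_k})$. Since \eqref{condition} fails, by a condensation argument along the geometric sequence $\varepsilon_0/\varepsilon_{n_k}\approx 2^k$ we get $\sum_k 1/\varphi(2^k)=\infty$, hence $\sum_k 1/N_k=\infty$. Take $g_k$ to be a normalized modulated Dirichlet-type kernel $e^{i(n_k+m_k)x}D_{m_k}(x-\xi_k)$ with random centres $\xi_k$, adjusted to be real. Its $L^p$-normalized version has height about $m_k^{1/p}$ on a set $I_k$ of measure about $1/m_k$ around $\xi_k$, and a suitable partial sum $S_{n}(g_k)$ with $n$ inside the block gives a one-sided half-kernel of comparable size there. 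Choose $m_k\asymp \lambda_k N_k$, with $\lambda_k\to\infty$ slowly so that still $\sum 1/(\lambda_kN_k)=\infty$. This is possible because for a divergent positive series one can always find slowly growing weights keeping it divergent. Then on $I_k$,
\begin{equation*}
|a_k(g_k-S_n g_k)|\gtrsim \varepsilon_{n_k}\lambda_k^{1/p}N_k^{1/p}.
\end{equation*}
The hypothesis $\varphi(x)=o(x^\alpha)$ guarantees $m_k\ll n_k$, so the block fits inside $(n_k,2n_k]$.

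\emph{Almost-everywhere conclusion.} Since $\sum m(I_k)=\infty$ and the centres are independent (or chosen by an explicit deterministic rotation argument), the second Borel--Cantelli lemma gives that almost every $x$ lies in infinitely many $I_k$. For such $x$ and $n$ in the corresponding block, write
\begin{equation*}
F-S_nF=a_k(g_k-S_ng_k)+\sum_{j>k}a_jg_j.
\end{equation*}
The earlier blocks vanish from this difference. The main term is $\gtrsim\varepsilon_{n}\lambda_k^{1/p}\varphi^{1/p}(\varepsilon_0/\varepsilon_n)$, which gives \eqref{limsup}, provided the tail $\sum_{j>k}a_jg_j(x)$ is negligible.

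\emph{Main obstacle.} Controlling this tail pointwise is the hardest step. The bound in $L^p$ is easy, but a pointwise bound is needed on the same random sets. I would first use $\|g_j\|_\infty\lesssim m_j^{1/p}$ away from $I_j$: the tail of the kernel there is about $m_j^{1/p-1}/|x-\xi_j|$. Then a Borel--Cantelli argument with $\sum_j m(\{|a_jg_j|>\varepsilon_{n_k}\})<\infty$ shows that for a.e. $x$ the tail is eventually $O(\varepsilon_{n_k})$. This requires choosing $n_{k+1}$ huge relative to $n_k$, which makes $a_j$ tiny compared with $\varepsilon_{n_k}$. That choice interacts with the divergence $\sum 1/N_k=\infty$, so one must pass to a subsequence while preserving the divergence; making this compatible is the delicate bookkeeping.
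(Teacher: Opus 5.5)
\textbf{There is a genuine gap, and it sits at the step you defer as ``delicate bookkeeping''.} The remedy you sketch cannot close it.

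Your own choices are already inconsistent. Take $n_{k+1}\geqslant n_k^2$ with $n_1\geqslant 2$, and the admissible sequence $\varepsilon_n=(n+1)^{-q}$. Then $\ln(\varepsilon_0/\varepsilon_{n_k})\geqslant q2^{k-1}\ln 2$, so $\varepsilon_0/\varepsilon_{n_k}$ is nowhere near $2^k$. For $\varphi(x)=\ln(x+2)$, for which \eqref{condition} fails, this gives $N_k\gtrsim 2^k$. Hence $\sum_k m(I_k)<\infty$, and almost every $x$ lies in only finitely many $I_k$.

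Milder sparsification does not rescue your first Borel--Cantelli step either. The set $\{|a_jg_j|>\varepsilon_{n_{j-1}}\}$ contains a fixed fraction of the peak of $g_j$, of measure $\gtrsim 1/m_j$, whenever $\varepsilon_{n_j}m_j^{1/p}\gtrsim\varepsilon_{n_{j-1}}$. Since $\sum_j 1/m_j=\infty$ is exactly what you need, summability forces $\varepsilon_{n_j}/\varepsilon_{n_{j-1}}\lesssim m_j^{-1/p}$ outside a set $J$ of levels with $\sum_{j\in J}1/m_j<\infty$. For $\varphi$ near the borderline (e.g.\ $\varphi(x)\asymp\ln x\,\ln\ln x$ for large $x$), this extra decay of $\varepsilon_{n_k}$ already makes $\sum_k 1/N_k$ converge. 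In the geometric regime that condensation requires, the exceptional sets are never summable. Aiming for a tail that is eventually $O(\varepsilon_{n_k})$ almost everywhere is simply the wrong target.

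\textbf{The missing idea: compare the tail with the main term, on half the good set.} The tail only has to be small relative to the main term $\varepsilon_{n_l}m_l^{1/p}$, and only on a fixed fraction of the good set at each level. This is how the paper proceeds:
\begin{itemize}
\item Its blocks $Q_k(x)=2\sin(3n_kx)\sum_{n=m_k}^{2m_k}n^{1/p-1}\cos nx$ play the role of your $g_k$. They satisfy $\|Q_k\|_p\lesssim 1$ by Hardy--Littlewood, and $|Q_k-S_{3n_k}Q_k|\geqslant\frac18 m_k^{1/p}$ on a set $F_k$ with $m(F_k)\geqslant B/m_k$.
\item It takes $n_k$ to be the first index with $\varepsilon_n<10^{-q'}\varepsilon_{n_{k-1}}$. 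So the amplitudes decay geometrically, $\varepsilon_0/\varepsilon_{n_k}$ grows at most geometrically (by the $n^q\varepsilon_n$ hypothesis), and $n_{k+1}>10n_k$ comes for free.
\item The whole tail then has $L^p$ norm at most $C(p,q')\varepsilon_{n_l}$, with $C(p,q')\to0$ as $q'\to\infty$.
\item Let $G_l$ be the set where the tail exceeds $\frac1{16}\varepsilon_{n_l}m_l^{1/p}$. Chebyshev gives $m(G_l)\leqslant B/(2m_l)\leqslant\frac12 m(F_l)$. This is not summable, but it is harmless: the translation lemma (Theorem~\ref{Borel-Kantelli}) is applied to $\mathcal{H}_l=F_l\setminus G_l$, whose measures still have divergent sum.
\end{itemize}
This Chebyshev bound does not depend on how large $m_{l+1}$ is relative to $m_l$, so no regularity of $m_k$ is needed.

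In your random-centre language, fix $x$. The event $x-\xi_l\in F_l$ depends only on $\xi_l$, and the tail depends only on $\xi_j$ with $j>l$. So discarding $G_l$ costs at most half the probability at each level. A second-moment (Kochen--Stone) bound then gives probability at least $\frac14$, and the Kolmogorov $0$--$1$ law upgrades this to full measure.

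A smaller point: define $N_k$ through $\varepsilon_{2n_k}$, the index where you actually evaluate, because $\varphi$ need not be doubling.
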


It is worth noting that in the case of continuous $2\pi$-periodic functions,
a similar result was obtained by K.\,I.~Oskolkov~\cite[Theorem~2]{Oskolkov1},
asserting that in the case of sufficiently slowly decreasing best approximations,
estimate~\eqref{Oskolkov_eq} is also sharp.

We now turn to the case $d = 2$. Let $S_{n_1, n_2}(f, x_1, x_2)$ be the
$(n_1, n_2)$-th rectangular partial sum of the double trigonometric Fourier
series of $f$ at the point $(x_1, x_2) \in \mathbb{T}^2$. The Fourier series
of $f$ is said to converge in the sense of Prinsheim (or over rectangles)
at a point $(x_1, x_2) \in \mathbb{T}^2$ if the limit
\begin{equation*}
\lim\limits_{\min{\{n_1, n_2\}} \to \infty}
S_{n_1, n_2}(f, x_1, x_2)
\end{equation*}
exists.

In 1971, C.~Fefferman~\cite{Fefferman} constructed a continuous function
$F$ on $\mathbb{T}^2$ whose Fourier series diverges in the Prinsheim sense
almost everywhere on $\mathbb{T}^2$. Consequently, in the two-dimensional case
there is no sufficient condition for Prinsheim convergence almost everywhere
in terms of membership of a function in some space $L^p(\mathbb{T}^2)$.
Later, M.~Bakhbukh and E.\,M.~Nikishin~\cite{BahNik} proved that there exists
a continuous function $f$ on $\mathbb{T}^2$ with Fourier series diverging
on a set of positive measure and with modulus of continuity satisfying
$\omega(f, \delta) = O(\ln^{-1}{(1/\delta)})$ as $\delta \to +0$
(see also~\cite{Bak97} for an extension of this result to the even-dimensional
case). In~\cite[Theorem~3]{Oskolkov1}, K.\,I.~Oskolkov proved that if the
modulus of continuity of a continuous function $f$ on $\mathbb{T}^2$ satisfies
\begin{equation*}
\omega(f, \delta) = o\left( \frac{1}{\ln(1/\delta)
\ln{\ln{\ln(1/\delta)}}} \right), \quad \delta \rightarrow +0,
\end{equation*}
then the Fourier series of $f$ converges in the Prinsheim sense almost everywhere.

An analogue of this result for the case $L^p,\; p=1, 2$, follows from the
following result of L.\,V.~Zhizhiashvili (1975)~\cite{Zhizhiashvili}
(see also~\cite[Ch.~1, \S~1, 2]{Dya92}). We formulate it for $d=2$.
Let $\alpha = (\alpha_1, \alpha_2)$ consist of zeros and ones,
$p \in \{1, 2\}$, $f \in L^p(\mathbb{T}^2)$, $s \in \mathbb{T}^2$. Set
\begin{gather*}
\Delta_{s, (1, 1), 1}(f, x) = \Delta_{s, (1, 0), 1}(f, x)
:= f(x_1 + s_1,  x_2) - f(x_1 - s_1, x_2), \\
\Delta_{s, (1, 1), 2}(f, x) = \Delta_{s, (0, 1), 2}(f, x)
:= f(x_1,  x_2 + s_2) - f(x_1, x_2 - s_2),\\
\Delta_{s, (0, 1), 1}(f, x) = \Delta_{s, (1, 0), 2}(f, x) := f(x).
\end{gather*}
Further, let
\begin{equation*}
\Delta_{s, \alpha}(f, x) := \Delta_{s, \alpha, 1}(f, x) \circ
\Delta_{s, \alpha, 2}(f, x).
\end{equation*}
Then, if
\begin{equation}\label{Zhi_cond}
\sum\limits_{\alpha \neq (0, 0)}
\int\limits_{\mathbb{T}^2}
\int\limits_{\mathbb{T}^{\alpha_1 + \alpha_2}}
\frac{|\Delta_{s, \alpha}(f, x)|^p}{|s_1^{\alpha_1} s_2^{\alpha_2}|}
\, (ds_1)^{\alpha_1} \, (ds_2)^{\alpha_2} \, dx < \infty,
\end{equation}
then the Fourier series of $f$ converges in the Prinsheim sense almost everywhere.
In 1994, A.\,M.~Stokolos proved (see~\cite[Lemma~3]{Stokolos}) that, in the
particular case $p=1$, the inequality
\begin{equation}\label{Stok_cond}
\int\limits_{\mathbb{T}^2}
\int\limits_{\mathbb{T}^{\alpha_1 + \alpha_2}}
\frac{|\Delta_{s, (1, 1)}(f, x)|^p}{|s_1 s_2|}
\, ds_1 \, ds_2 \, dx \leqslant C_p
\int\limits_0^{\pi} \frac{\omega(f, t)_p^p}{t}
\ln{\frac{2\pi}{t}} \, dt
\end{equation}
holds for some constant $C_p>0$. Moreover, Stokolos's proof of
inequality~\eqref{Stok_cond} is valid for $p \geqslant 1$. Consequently, the
condition
\begin{equation}\label{Stok_cond_2}
\sum\limits_{n=1}^{\infty}
\frac{\omega\left(f, \frac{1}{n} \right)_p^p}{n} \ln{n} < \infty
\end{equation}
is sufficient for condition~\eqref{Zhi_cond} to hold. Therefore, if
$f \in L^p(\mathbb{T}^2)$ satisfies condition~\eqref{Stok_cond_2}
for $p=1, 2$, then the Fourier series of $f$ converges in the Prinsheim sense
almost everywhere. Note that an analogous criterion for $p > 2$ does not
hold, by virtue of the above-cited work of M.~Bakhbukh and E.\,M.~Nikishin.
We also mention that in~\cite[Ch.~1, \S~2]{Dya92} the following question
is posed: for $1 < p < 2$, is condition~\eqref{Zhi_cond} sufficient for
Prinsheim convergence almost everywhere of the Fourier series of
$f \in L^p(\mathbb{T}^2)$? In the final part of this paper we show that
for condition~\eqref{Stok_cond_2} the answer to an analogous question is
positive. Namely, we extend the condition in terms of the modulus of
continuity sufficient for the Fourier series to converge in the Prinsheim
sense almost everywhere from the case $p=2$ to the case $1 < p < \infty$.

\begin{theorem}\label{theorem3}
Let $1 < p < \infty$ and $f \in L^p(\mathbb{T}^2)$. If
\begin{equation}\label{cond_t_3}
\sum\limits_{n=1}^{\infty}
\frac{\omega\left(f, \frac{1}{n}\right)_p^p}{n}
(\ln{n})^{p-1} < \infty,
\end{equation}
then the Fourier series of $f$ converges in the Prinsheim sense almost everywhere.
\end{theorem}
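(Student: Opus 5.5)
We argue by approximation with rectangular partial sums, in the spirit of Theorem~\ref{theorem1}. Write $f=\sum_{j,k\ge 0}\Delta_{j,k}f$, where $\Delta_{j,k}f$ are the dyadic rectangular blocks. They are built from the one-dimensional differences $\delta_j=S_{2^j}-S_{2^{j-1}}$ in each variable, with $\delta_0=S_1$. It suffices to show that almost everywhere the double maximal function
\[
\mathcal{M}f(x):=\sum_{j,k}\ \sup_{2^{j-1}<n_1\le 2^{j},\ 2^{k-1}<n_2\le 2^k}\bigl|S_{n_1,n_2}(f,x)-S_{2^{j-1},2^{k-1}}(f,x)\bigr|
\]
is finite on tails in a summable way. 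More precisely, we aim for the estimate
\[
\sum_{j,k}\Bigl\|\sup_{n_1\sim 2^j,\ n_2\sim 2^k}\bigl|S_{n_1,n_2}f-S_{2^{j-1},2^{k-1}}f\bigr|\Bigr\|_p^p<\infty .
\]
Combined with the $L^p$-convergence of the dyadic square sums $S_{2^j,2^k}f$, this yields Prinsheim convergence a.e. by a standard Borel--Cantelli/Chebyshev argument. Indeed, the $p$-th powers of the oscillation maximal functions over dyadic rectangles are summable, so these oscillations tend to zero a.e. as $\min(j,k)\to\infty$. The a.e. convergence of $S_{2^j,2^k}f$ along dyadic rectangles follows similarly from $\sum_{j,k}\|\Delta_{j,k}f\|_p^p<\infty$.

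The key analytic tool is the Carleson--Hunt theorem applied iteratively in each variable: the operator $\sup_{n_1,n_2}|S_{n_1,n_2}g|$ is bounded on $L^p(\mathbb{T}^2)$ for $1<p<\infty$. Applying this to the function obtained by removing low frequencies, namely $g=f-S_{2^{j-1},\infty}f-S_{\infty,2^{k-1}}f+S_{2^{j-1},2^{k-1}}f$ restricted to frequencies $\sim (2^j,2^k)$, we get a bound for the $(j,k)$ oscillation. Splitting the oscillation into three parts, a shift in $n_1$ only, a shift in $n_2$ only, and a mixed part, each is bounded in $L^p$ by a constant times a mixed best approximation. These are $E_{2^{j-1},\infty}(f)_p$ and $E_{\infty,2^{k-1}}(f)_p$, each dominated via Jackson's inequality by $\omega(f,2^{-\min(j,k)})_p$, while the Littlewood--Paley theorem lets us localize to the block. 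The summation needs more care, since a naive sum over all pairs $(j,k)$ of $\omega(f,2^{-\min(j,k)})_p^p$ gives $\sum_m m\,\omega(f,2^{-m})_p^p$. That corresponds to $(\ln n)^1$, which matches \eqref{Stok_cond_2} but not the sharper weight $(\ln n)^{p-1}$ when $p<2$. For $p\ge 2$ we have $(\ln n)^{p-1}\ge \ln n$, so a weaker argument is not enough to cover the full statement either. The correct exponent comes from the Littlewood--Paley square function. For fixed $m=\min(j,k)$, the strip of rectangles with, say, $j=m$ and $k\ge m$ should be handled jointly rather than block by block. Its maximal oscillation is bounded by the $L^p$ norm of an $\ell^\infty$ or $\ell^p$-valued sum over $k$. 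Using Hölder's inequality on the $k$-sum with a weight tied to the logarithm, together with the vector-valued Carleson theorem, we obtain the bound $\sum_m m^{p-1}\omega(f,2^{-m})_p^p$, which corresponds to \eqref{cond_t_3}.

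The main obstacle is exactly this bookkeeping over the strips $j=m$, $k\ge m$. One must control $\sup_{n_2}$ of partial sums in $x_2$ of a function whose $x_1$-frequencies lie at level $2^m$. The plan is to combine the one-dimensional Carleson--Hunt maximal bound in $x_2$ with a chaining argument in the spirit of the proof of Theorem~\ref{theorem1}: telescope over dyadic levels in $k$, apply Hölder with weights $k^{-(p-1)/p}$-type, and exploit $\omega(f,\cdot)_p$ only through $E$ at level $2^m$ in $x_1$. A first attempt would use $\|\sup_k|\sum_{i\le k}a_i|\|_p\le \|(\sum_i |a_i|^p)^{1/p}\|_p\,(\#\text{levels})^{1-1/p}$, where the number of levels relevant at scale $m$ is $\sim m$. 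This produces precisely the factor $m^{p-1}$.
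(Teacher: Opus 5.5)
Your proposal has genuine gaps, and the central one is the claimed key tool. The operator $\sup_{n_1,n_2}|S_{n_1,n_2}g|$ is \emph{not} bounded on $L^p(\mathbb{T}^2)$ for any $p<\infty$. If it were, density of trigonometric polynomials would give Prinsheim convergence a.e.\ for every $f\in L^p(\mathbb{T}^2)$, hence for every continuous $f$, contradicting Fefferman's example cited in the introduction.

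Carleson--Hunt cannot be ``iterated'' here. The supremum over $n_1$ would have to sit inside the $x_2$-maximal operator, so you would need an $\ell^\infty$-valued Carleson--Hunt inequality in $x_2$. Combined with Carleson--Hunt in $x_1$, that inequality would give exactly the bound just refuted.

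Your target estimate is also false. Take $f(x_1,x_2)=g(x_1)$ with $g$ smooth but not a trigonometric polynomial. Then $S_{n_1,n_2}f-S_{2^{j-1},2^{k-1}}f=S_{n_1}g-S_{2^{j-1}}g$ for every $k$, so each row $j$ contributes infinitely many equal nonzero terms, although \eqref{cond_t_3} holds. The bookkeeping has the same defect. Infinitely many pairs have $\min(j,k)=m$; it is $\max(j,k)=m$ that has about $m$ pairs. So the ``naive sum'' $\sum_m m\,\omega(f,2^{-m})_p^p$ is not what you get. Your closing strip/H\"older paragraph never says how the supremum over the smaller index is controlled, and that is the whole difficulty.

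The idea you are missing is that for $n_1\le n_2$ no maximal control in the smaller index is needed. Write $S^{(i)}_n$ for the one-dimensional partial sum in $x_i$. The paper uses
$f-S_{n_1,n_2}f=S^{(1)}_{n_1}\bigl(f-S^{(2)}_{n_2}f\bigr)+\bigl(f-S^{(1)}_{n_1}f\bigr)$
and bounds the first term crudely through the Dirichlet kernel: $|S^{(1)}_{n_1}h|\lesssim \ln n_1\cdot\mathcal{M}_1 h$, where $\mathcal{M}_1$ is the Hardy--Littlewood maximal function in $x_1$.

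This loss of $\ln n_1\le\ln n_2$ is affordable because of the scheme of Theorem~\ref{theorem1}. Apply Hunt's inequality once per block $(m_k,m_{k+1}]$, $m_k=2^{2^k}$, to $f_t-T_{k,t}$. Then integrate in $t$ using Jackson's inequality and Lemma~\ref{lemma}. This yields $|f-S^{(2)}_{n_2}f|\le C(x_1,x_2)/\theta(n_2)$ with $C\in L^p(\mathbb{T}^2)$ and $\ln n=o(\theta(n))$. The second term is handled by the same estimate in $x_1$.

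The exponent $p-1$ comes from these doubly exponential blocks. With $\theta(m_k)$ slightly larger than $\ln m_k$, one needs $\sum_k\theta(m_k)^p\,\omega(f,1/m_k)_p^p<\infty$. Since $\sum_{m_k\le n<m_{k+1}}(\ln n)^{p-1}/n\asymp(\ln m_k)^p$, this is exactly how \eqref{cond_t_3} enters. Running the same argument over dyadic blocks would only give the weight $(\ln n)^p$.
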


Note that in the case $1 < p < 2$, condition~\eqref{cond_t_3} is more precise
than condition~\eqref{Stok_cond_2}. From Theorem~\ref{theorem3} it follows
that if for some function $\varphi:[0, +\infty) \rightarrow [0, +\infty)$
satisfying condition~\eqref{condition} and for $f \in L^p(\mathbb{T}^2)$ the
condition
\begin{equation*}
\omega(f, \delta)_p = O\left(\frac{1}{\ln{(1/\delta)}
\varphi^{1/p}(\ln{(1/\delta)})}\right)
\end{equation*}
holds as $\delta \to +0$, then the Fourier series of $f$ converges in the
Prinsheim sense almost everywhere. For the higher-dimensional case $d > 2$,
similar results were recently obtained by Goginava~\cite{Goginava2026} (see also~\cite{Goginava2026b}).

\section{Auxiliary results}

Let $S^*(f, x) = \sup\limits_{n \in \mathbb{Z}_+} |S_n(f, x)|$ be the
majorant of the partial sums of the Fourier series of $f \in L^p(\mathbb{T})$.
L.~Carleson~\cite{Carleson} proved that if $f \in L^2(\mathbb{T})$, then
the trigonometric Fourier series of $f$ converges almost everywhere.
R.~Hunt~\cite{Hunt} extended this almost everywhere convergence result to
functions in $L^p(\mathbb{T}),\; p > 1$. Moreover, he obtained the following
remarkable estimate.

\begin{thmx}\label{Hunt}
If $1 < p < \infty$, then there exists a constant $\overline{C}_p > 0$,
depending only on $p$, such that for all $f \in L^p(\mathbb{T})$,
\begin{equation}\label{eq_Hunt}
\|S^*(f, \cdot)\|_p^p \leqslant \overline{C}_p \| f \|_p^p.
\end{equation}
From~\eqref{eq_Hunt} it follows that the Fourier series of any
$f \in L^p(\mathbb{T})$ converges almost everywhere.
\end{thmx}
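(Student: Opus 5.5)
Since Theorem~\ref{Hunt} is a classical result that the paper only cites, I would not reprove it from scratch here. Still, the route I would follow is this. The second assertion, almost everywhere convergence, follows from \eqref{eq_Hunt} by the standard density argument. For $f\in L^p(\mathbb{T})$ and any trigonometric polynomial $P$, the partial sums $S_n(P,x)$ equal $P(x)$ for all large $n$. Hence
\begin{equation*}
\varlimsup_{n\to\infty} S_n(f,x) - \varliminf_{n\to\infty} S_n(f,x) \leqslant 2 S^*(f-P, x).
\end{equation*}
Chebyshev's inequality and \eqref{eq_Hunt} then give
\begin{equation*}
m\Bigl\{x:\ \text{oscillation of } S_n(f,x) > \lambda\Bigr\} \leqslant 2^p\lambda^{-p}\,\overline{C}_p\,\|f-P\|_p^p.
\end{equation*}
The right-hand side can be made arbitrarily small, because trigonometric polynomials are dense in $L^p(\mathbb{T})$. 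So the oscillation vanishes almost everywhere, and the limit is $f(x)$ almost everywhere because $S_n(f)\to f$ in $L^p$ for $1<p<\infty$ (M.~Riesz).

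For the maximal inequality itself, I would first write $S_n(f,x)$ as a conjugate-function expression. Namely,
\begin{equation*}
S_n f = e^{-inx}\tilde H\bigl(e^{inx} f\bigr) - e^{i(n+1)x}\tilde H\bigl(e^{-i(n+1)x}f\bigr)
\end{equation*}
up to bounded error terms. This reduces $S^*$ to the Carleson maximal operator
\begin{equation*}
\mathcal{C}f(x)=\sup_{N}\Bigl|\mathrm{p.v.}\int f(y)\,\frac{e^{iN(x-y)}}{x-y}\,dy\Bigr|,
\end{equation*}
which I would then transfer to $\mathbb{R}$. The core estimate is a restricted weak type bound
\begin{equation*}
m\{\mathcal{C}\chi_F > \lambda\} \leqslant C\,\lambda^{-p}\,m(F)
\end{equation*}
for all $1<p<\infty$. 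For this I would take the time--frequency approach of Lacey--Thiele, or equivalently Hunt's refinement of Carleson's argument. The steps are: linearize $N=N(x)$; decompose the kernel into wave packets adapted to dyadic tiles of area one; organize the tiles into trees; and bound the resulting model sum through two quantities, the \emph{energy} (a Bessel-type estimate) and the \emph{mass} (the density of the level set $\{N(x)\in\omega\}$ within tiles). A greedy selection of maximal trees, together with the single-tree estimate via Calder\'on--Zygmund theory, then yields the restricted weak type bound. Marcinkiewicz interpolation between two exponents on either side of $p$ gives the strong bound \eqref{eq_Hunt}, since $\mathcal{C}$ is sublinear.

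The main obstacle is the combinatorial tree-selection step. One has to show that the trees collected at a given energy or mass level $2^{-k}$ have total top measure of size $O(2^{2k}m(F))$, respectively $O(2^{k}m(\{\mathcal{C}\chi_F>\lambda\}))$. Extending from $L^2$ to general $p$, which is Hunt's contribution, requires exploiting exponential decay in the mass parameter away from an exceptional set. I would handle this by removing the set where $M\chi_F$ is large before running the selection.
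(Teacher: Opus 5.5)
The paper gives no proof of this statement. It is quoted from Hunt~\cite{Hunt}, and almost-everywhere convergence is asserted as an immediate consequence of \eqref{eq_Hunt}. Your density argument is exactly that standard deduction, and it is correct. You can even avoid M.~Riesz by applying Chebyshev directly to $\varlimsup_n|S_n(f,x)-f(x)|\leqslant S^*(f-P,x)+|f(x)-P(x)|$.

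Where you genuinely differ is in the sketch of \eqref{eq_Hunt} itself. Hunt's argument refines Carleson's original proof and produces a single distribution estimate for characteristic functions: roughly $m\{S^*\chi_F>y\}\leqslant C\,m(F)\,y^{-1}\log(e/y)$ for small $y$, and $\leqslant C\,m(F)e^{-cy}$ for large $y$. This gives restricted weak type $(p,p)$ for every $1<p<\infty$ at once, with explicit dependence of the constants on $p$, and hence also convergence in $L(\log^+L)^2$. The Lacey--Thiele time--frequency route you describe is shorter and more modular, but it treats the two halves of the range differently.

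Your account of the mechanism for $p\neq 2$ is slightly off. It is not ``exponential decay in the mass parameter.''
\begin{itemize}
\item For $1<p<2$, the exceptional set $\{M\chi_F>C\,m(F)/m(E)\}$ improves the \emph{energy} of the surviving tiles, not their mass, to $O(m(F)/m(E))$. This needs the energy of a tree to be bounded by $L^1$-averages of $\chi_F$ over its tiles. The naive $L^2$ bound only yields $O\bigl((m(F)/m(E))^{1/2}\bigr)$, which reproduces $p=2$ and nothing more.
\item For $2<p<\infty$, no exceptional set is needed. The trivial $O(1)$ bounds for energy and mass, combined with your two counting estimates, already give $\int_E\mathcal{C}\chi_F\,dx\leqslant C_p\,m(F)^{1/p}m(E)^{1-1/p}$.
\end{itemize}
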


Let $E$ be a measurable subset of $\mathbb{T}$. Denote
\begin{equation*}
E^y := \{x + y: x \in E \}
\end{equation*}
the translate of $E$ by $y$.

\begin{thmx}[{\cite[Ch.~13, \S~1, Lemma~1.24]{Zigmund2}}]\label{Borel-Kantelli}
Let $\{ E_n \}_{n=1}^{\infty}$ be a sequence of measurable subsets of
$\mathbb{T}$ such that
\begin{equation*}
\sum\limits_{n=1}^{\infty} mE_n = \infty.
\end{equation*}
Then there exists a sequence $\{ x_n \}_{n=1}^{\infty} \subset \mathbb{T}$
such that
\begin{equation*}
m\left(\varlimsup\limits_{n \to \infty} E_n^{x_n} \right)
:= m\left(\bigcap\limits_{k=1}^{\infty}
\bigcup\limits_{n=k}^{\infty} E_n^{x_n} \right) = 2 \pi.  
\end{equation*}
\end{thmx}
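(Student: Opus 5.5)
We would prove Theorem~\ref{Borel-Kantelli} probabilistically, choosing the translations $x_n$ at random and showing that almost every choice works. Let $\Omega := \mathbb{T}^{\mathbb{N}}$ carry the product probability measure $P := \bigotimes_{n=1}^{\infty} \frac{dx_n}{2\pi}$. The coordinates $\omega = (x_n)_{n=1}^{\infty}$ are then independent random variables, each uniformly distributed on $\mathbb{T}$. Translations are understood modulo $2\pi$. We aim to show that
\begin{equation*}
P\Bigl\{\omega:\ m\Bigl(\varlimsup_{n\to\infty} E_n^{x_n}\Bigr) = 2\pi\Bigr\} = 1 .
\end{equation*}
Any $\omega$ in this event gives the required sequence.

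\emph{Step 1 (a fixed point).} Fix $x \in \mathbb{T}$. We have $x \in E_n^{x_n}$ if and only if $x - x_n \in E_n$. The events $A_n(x) := \{\omega : x - x_n \in E_n\}$ depend on distinct coordinates, so they are independent. Since $x_n$ is uniform and Lebesgue measure on $\mathbb{T}$ is invariant under $y \mapsto x - y$, we get $P(A_n(x)) = mE_n/(2\pi)$. By hypothesis $\sum_n P(A_n(x)) = \infty$. The second Borel--Cantelli lemma for independent events therefore gives $P(\varlimsup_n A_n(x)) = 1$ for every $x$. Concretely, $\prod_{n\ge k}(1 - P(A_n(x))) \le \exp\bigl(-\sum_{n \ge k} P(A_n(x))\bigr) = 0$ for each $k$.

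\emph{Step 2 (Fubini).} Consider
\begin{equation*}
G := \{(x,\omega) \in \mathbb{T}\times\Omega :\ x - x_n \in E_n \text{ for infinitely many } n\}.
\end{equation*}
Each set $\{(x,\omega) : x - x_n \in E_n\}$ is the preimage of $E_n$ under the measurable map $(x,\omega) \mapsto x - x_n$. Hence it is measurable in the product $\sigma$-algebra (after modifying each $E_n$ by a null set we may assume the $E_n$ are Borel), and $G$ is obtained from these sets by countable unions and intersections. By Step 1, every $x$-section of $G$ has $P$-measure $1$. By Tonelli, $(m \otimes P)(G) = 2\pi$. Applying Tonelli in the other order, $\int_\Omega m(G_\omega)\, dP(\omega) = 2\pi$ with $m(G_\omega) \le 2\pi$, so $m(G_\omega) = 2\pi$ for $P$-almost every $\omega$. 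Since $G_\omega = \varlimsup_n E_n^{x_n}$, fixing any such $\omega$ completes the proof. Replacing $E_n$ by a Borel set differing on a null set changes $\varlimsup_n E_n^{x_n}$ by at most a countable union of null sets, which does not affect the conclusion.

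\emph{Main obstacle.} The only delicate points are the measurability of $G$ in the infinite product space and the legitimacy of interchanging the order of integration. Both are handled by the Borel reduction and by Tonelli for nonnegative functions. Should one prefer to avoid infinite products, an alternative is to group indices into finite blocks $N_j < n \le N_{j+1}$ with $\sum_{N_j < n \le N_{j+1}} mE_n \ge 2\pi j$. On each block, averaging over finite products $\mathbb{T}^{N_{j+1}-N_j}$ with the same independence computation produces translates whose union misses a set of measure at most $2\pi e^{-j}$. Letting $j \to \infty$ then gives full measure for the $\varlimsup$.
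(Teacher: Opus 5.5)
Your proof is correct, and it takes a different route from the standard one. The paper gives no proof of its own; it quotes the lemma from Zygmund. The standard proof of this lemma is deterministic. For measurable $E,F\subset\mathbb{T}$, Fubini on $\mathbb{T}\times\mathbb{T}$ gives $\frac{1}{2\pi}\int_{\mathbb{T}} m\bigl(F\setminus E^{y}\bigr)\,dy = mF\,\bigl(1-mE/(2\pi)\bigr)$. So one can choose the translates one at a time within a finite block of indices with $\sum mE_n\geqslant 2\pi j$. The uncovered part then has measure at most $2\pi\prod(1-mE_n/(2\pi))\leqslant 2\pi e^{-j}$, and full measure of the $\varlimsup$ follows exactly as in your closing remark.

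Your main argument is the randomized version of this step-by-step choice. You take i.i.d.\ uniform translates, apply the second Borel--Cantelli lemma at each fixed $x$, and use Tonelli on $\mathbb{T}\times\mathbb{T}^{\mathbb{N}}$ to pass to almost every $\omega$. This buys a stronger conclusion: almost every sequence $(x_n)$ with respect to product Haar measure works, not merely some sequence. It is also very short once the infinite product measure and Borel--Cantelli are taken for granted. The price is exactly the bookkeeping you flagged, and your Borel reduction is genuinely needed there. Joint measurability of $\{(x,\omega): x-x_n\in E_n\}$ in the product $\sigma$-algebra is only guaranteed when $E_n$ is Borel.

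The standard proof needs nothing beyond two-variable Fubini and no infinite products. Your block variant averages over the finite products $\mathbb{T}^{N_{j+1}-N_j}$ all at once rather than step by step, and is essentially that proof.
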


Let $\mathcal{M}$ denote the Hardy--Littlewood maximal function
(see~\cite{HL1}), defined for $f \in L(\mathbb{T})$ by
\begin{equation*}
\mathcal{M}f(x) := \sup\limits_{0 < h \leqslant \pi}
\frac{1}{2h} \int\limits_{x-h}^{x+h} |f(t)| \, dt.
\end{equation*}
This operator satisfies the following strong $(p, p)$ inequality
(see, e.g.,~\cite[Ch.~1, \S~13]{Zigmund1}).

\begin{thmx}\label{maximal}
Let $1 < p < \infty$ and $f \in L^p(\mathbb{T})$. Then there exists a
constant $A_p > 0$, depending only on $p$, such that
\begin{equation*}
\|\mathcal{M}f\|_p \leqslant A_p \|f\|_p.
\end{equation*}
\end{thmx}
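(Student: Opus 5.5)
The proof goes through the weak type $(1,1)$ estimate for $\mathcal{M}$ and a truncation (layer-cake) argument, which amounts to Marcinkiewicz interpolation written out by hand. We regard $f$ as a $2\pi$-periodic function on $\mathbb{R}$.

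First we check that $\mathcal{M}f$ is measurable. For fixed $x$ the average $\frac{1}{2h}\int_{x-h}^{x+h}|f|$ is continuous in $h\in(0,\pi]$. So the supremum may be taken over rational $h$. Each such average is a continuous function of $x$, hence $\mathcal{M}f$ is lower semicontinuous, and in particular the level sets $E_\lambda:=\{x\in\mathbb{T}:\mathcal{M}f(x)>\lambda\}$ are open.

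\textbf{Step 1 (weak type $(1,1)$).} We show that $m(E_\lambda)\leqslant C\|f\|_1/\lambda$ with an absolute constant $C$.
\begin{enumerate}
\item For each $x\in E_\lambda$ choose an interval $I_x=(x-h_x,x+h_x)$ with $h_x\leqslant\pi$ and $\int_{I_x}|f|>\lambda\, m(I_x)$.
\item Take any compact $K\subset E_\lambda$, viewed as a subset of $[-\pi,\pi]$, and cover it by finitely many intervals $I_x$.
\item Apply the finite Vitali covering lemma. Pick the longest interval, discard all intervals meeting it, and repeat. This yields disjoint intervals $I_1,\dots,I_N$ whose threefold dilates cover $K$.
\item All these intervals lie in $[-2\pi,2\pi]$. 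By periodicity, a family of disjoint intervals there overlaps each point of $\mathbb{T}$ at most twice. Hence
\begin{equation*}
m(K)\leqslant 3\sum_j m(I_j)<\frac{3}{\lambda}\sum_j\int_{I_j}|f|\leqslant \frac{6}{\lambda}\|f\|_1 .
\end{equation*}
\item Take the supremum over compact $K\subset E_\lambda$ to get the same bound for $m(E_\lambda)$.
\end{enumerate}

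\textbf{Step 2 (truncation).} Fix $\lambda>0$ and split $f=f_1+f_2$, where $f_1=f\,\mathbf{1}_{\{|f|>\lambda/2\}}$. Then $|f_2|\leqslant\lambda/2$, so $\mathcal{M}f_2\leqslant\lambda/2$, and by sublinearity $\mathcal{M}f\leqslant\mathcal{M}f_1+\lambda/2$. Applying Step 1 to $f_1$ gives
\begin{equation*}
m(E_\lambda)\leqslant m\{\mathcal{M}f_1>\lambda/2\}\leqslant\frac{12}{\lambda}\int_{\{|f|>\lambda/2\}}|f(t)|\,dt .
\end{equation*}

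\textbf{Step 3 (layer cake).} By the layer-cake formula and Fubini's theorem,
\begin{equation*}
\|\mathcal{M}f\|_p^p=p\int_0^\infty\lambda^{p-1}m(E_\lambda)\,d\lambda\leqslant 12p\int_{\mathbb{T}}|f(t)|\int_0^{2|f(t)|}\lambda^{p-2}\,d\lambda\,dt=\frac{12p\,2^{p-1}}{p-1}\|f\|_p^p .
\end{equation*}
The inner integral converges because $p>1$. This gives the claim with $A_p^p=12p\,2^{p-1}/(p-1)$.

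The only step needing care is the covering argument on the torus: intervals may wrap around, so one must check that the selected disjoint intervals, after unwrapping into $[-2\pi,2\pi]$, still give overlap at most $2$ on $\mathbb{T}$. This costs only an absolute constant. Everything else is routine.
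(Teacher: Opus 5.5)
Your proof is correct and complete. The weak type $(1,1)$ bound $m\{\mathcal{M}f>\lambda\}\leqslant 6\|f\|_1/\lambda$ checks out: it comes from the finite Vitali lemma together with your observation that disjoint intervals in $(-2\pi,2\pi)$ cover each point of $\mathbb{T}$ at most twice. The split of $f$ at height $\lambda/2$ and the layer-cake/Tonelli computation are also right, and they give $A_p^p=12p\,2^{p-1}/(p-1)$.

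The paper does not prove this statement at all; it quotes it as a classical fact from Zygmund (Vol.~1, Ch.~I, \S~13). The argument in that source is genuinely different and specifically one-dimensional. There one passes to the one-sided maximal functions $\theta_{\pm}$ and uses F.~Riesz's rising-sun lemma to obtain the localized estimate $m\{\theta_+>y\}\leqslant y^{-1}\int_{\{\theta_+>y\}}|f|$. This is converted into the rearrangement inequality $\theta_+^*(x)\leqslant x^{-1}\int_0^x f^*$, and Hardy's inequality finishes the job, since the symmetric average is at most $\frac12(\theta_++\theta_-)$.

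That route exploits the order structure of the line. It gives a better constant, of size about $p/(p-1)$, which tends to the sharp value $1$ as $p\to\infty$ (yours tends to $2$). It also yields the $L\log L$ and $L^\alpha$, $0<\alpha<1$, companion results.

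Your covering-plus-truncation route is cruder in the constant but fully self-contained and needs no rearrangements. It also carries over verbatim to balls in higher dimensions or to any doubling setting. For the paper's use of the theorem, bounding $\mathcal{M}C_{x_2}$ section by section in the proof of Theorem~\ref{theorem3}, only the existence of a constant depending on $p$ matters, so your proof serves equally well there.
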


In 1931, Hardy and Littlewood~\cite{HL2} proved the following classical
theorem (see, e.g.,~\cite[Ch.~12, \S~6]{Zigmund2}).

\begin{thmx}\label{Hardy-Littlewood}
Let $\{a_n\}_{n=1}^{\infty}$ be a monotone sequence of non-negative numbers
tending to zero, and let $p \in (1, +\infty)$. Then the function
$f(x) = \sum\limits_{n=1}^{\infty} a_n \cos{nx}$ belongs to $L^p(\mathbb{T})$
if and only if
\begin{equation*}
\sum\limits_{n=1}^{\infty} a_n^p n^{p-2} < +\infty.
\end{equation*}
Moreover, we have the inequalities
\begin{equation}\label{HL}
C_1 \sum\limits_{n=1}^{\infty} a_n^p n^{p-2} \leqslant \|f\|_p^p
\leqslant C_2 \sum\limits_{n=1}^{\infty} a_n^p n^{p-2},
\end{equation}
where $C_1, C_2$ are positive constants depending only on $p$.
\end{thmx}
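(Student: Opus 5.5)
The statement splits into the upper bound and the lower bound in \eqref{HL}. Both reduce to one-dimensional Hardy-type inequalities once we pass to dyadic blocks. Throughout, $C$ denotes constants depending only on $p$. Monotonicity gives $a_n\to0$, so Abel summation shows the series converges for $x\neq 0$ and that $f$ is even. Hence it suffices to work on $(0,\pi]$.

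\emph{Upper bound.} Fix $x\in(0,\pi]$ and let $N=[1/x]$. Split $f(x)=\sum_{n\le N}a_n\cos nx+\sum_{n>N}a_n\cos nx$. The first sum is at most $\sum_{n\le N}a_n$ in absolute value. For the tail, apply Abel summation with the partial sums $\sum_{k\le n}\cos kx$, which are bounded by $C/x$. Since $a_n$ is non-increasing, this gives a bound $C a_{N+1}/x\le C N a_N\le C\sum_{n\le N}a_n$. Thus $|f(x)|\le C A(1/x)$, where $A(k):=\sum_{n\le k}a_n$. Integrating over the intervals $x\in(1/(k+1),1/k]$ yields
\[
\|f\|_p^p\le C\sum_{k\ge1}k^{-2}A(k)^p .
\]
The discrete Hardy inequality $\sum_k k^{-2}A(k)^p\le C\sum_k k^{p-2}a_k^p$ (valid for $p>1$) then finishes. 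One can prove it directly, or note that for monotone $a_n$ it reduces on dyadic blocks to a geometric-series estimate.

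\emph{Lower bound.} Let $F_n$ be the Fejér kernel. Then
\[
\frac1\pi\int_{\mathbb T}f F_n=\sum_{k\le n}\Bigl(1-\frac{k}{n+1}\Bigr)a_k\ \ge\ c\,n a_n .
\]
This uses monotonicity, and the interchange is justified because $f\in L^1$ under either side of the equivalence: the upper bound handles one direction, and in the other $f\in L^p\subset L^1$ is assumed. Combining this with $F_n(t)\le C\min(n,1/(nt^2))$ gives, for $g=|f|$,
\[
n a_n\le C\int_{\mathbb T} g(t)\min\bigl(n,(nt^2)^{-1}\bigr)\,dt .
\]
Now decompose into dyadic annuli $I_j=\{2^{-j-1}<|t|\le2^{-j}\}$ and set $G_j=\int_{I_j}g$. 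For $n=2^m$ this reads
\[
2^m a_{2^m}\le C\Bigl(2^m\sum_{j\ge m}G_j+\sum_{j<m}2^{2j-m}G_j\Bigr).
\]
By monotonicity, $\sum_n n^{p-2}a_n^p\le C\sum_m 2^{m(p-1)}a_{2^m}^p$. Inserting the bound reduces the task to two discrete Hardy inequalities with geometric weights:
\[
\sum_m 2^{m(p-1)}\Bigl(\sum_{j\ge m}G_j\Bigr)^p+\sum_m2^{-m(p+1)}\Bigl(\sum_{j<m}2^{2j}G_j\Bigr)^p\le C\sum_j 2^{j(p-1)}G_j^p .
\]
Both follow by Hölder with a geometric splitting weight, such as $2^{\pm\varepsilon(j-m)}$, followed by interchanging sums. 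Finally, Hölder on $I_j$ gives $G_j^p\le|I_j|^{p-1}\int_{I_j}g^p$, so $2^{j(p-1)}G_j^p\le C\int_{I_j}g^p$. Summing over $j$ gives $\sum_n a_n^pn^{p-2}\le C\|f\|_p^p$.

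\emph{Main obstacle.} The key steps are the two Hardy-type summation inequalities, in the upper bound and in the dyadic lower bound. Their exponents must be balanced so that the geometric factors actually sum; this is exactly where $1<p<\infty$ is used. The pointwise bounds are routine once monotonicity is exploited through $N a_N\le A(N)$ and through Abel summation.
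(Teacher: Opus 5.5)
The paper gives no proof of this statement. It quotes it from Hardy--Littlewood~\cite{HL2} and Zygmund~\cite{Zigmund2}, where the classical argument uses the general Hardy--Littlewood (Paley-type) inequalities for Fourier coefficients of arbitrary $L^p$ functions for part of the range of $p$, and monotonicity for the rest. Your proof takes a different, self-contained route and is correct in outline, with one step not justified as written (below). You use monotonicity in both directions. Abel summation gives the majorant $|f(x)|\le C A(1/x)$. Testing against the Fej\'er kernel gives $n a_n\le C\int_{\mathbb T}|f(t)|\min(n,(nt^2)^{-1})\,dt$. Both inequalities then reduce to weighted discrete Hardy inequalities on dyadic blocks. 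This handles all $1<p<\infty$ at once without interpolation, at the price of being tied to monotone coefficients. The dyadic bookkeeping checks out:
\begin{itemize}
\item the kernel bound on $I_j$;
\item the block reduction $\sum_n n^{p-2}a_n^p\le C\sum_m 2^{m(p-1)}a_{2^m}^p$;
\item the two geometric Hardy inequalities (the first needs $\varepsilon<(p-1)/p$, which is where $p>1$ enters);
\item $2^{j(p-1)}G_j^p\le\int_{I_j}|f|^p$.
\end{itemize}

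The unjustified step is in the necessity direction. There you only know that the pointwise sum $f$ lies in $L^p$, and $f\in L^1$ alone does not let you integrate the series termwise against $F_n$. The partial sums converge only pointwise. The natural majorant $CA(1/|t|)$ from Abel summation is integrable iff $\sum a_n/n<\infty$, which is not available a priori there. What you need is that $\sum a_k\cos kx$ is the Fourier series of its integrable sum, and this is true:
\begin{itemize}
\item $\Phi(x):=\sum_k a_k\frac{\sin kx}{k}$ converges uniformly, since $a_k/k$ decreases and $k\cdot\frac{a_k}{k}\to0$ (Chaundy--Jolliffe).
\item The cosine series converges uniformly on $[\delta,\pi]$, so $\Phi(x)-\Phi(\delta)=\int_\delta^x f$. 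Letting $\delta\to0$ with $f\in L^1$ gives $\Phi(x)=\int_0^x f$.
\item Integrating by parts, $\frac1\pi\int_{\mathbb T}f(t)\cos kt\,dt=\frac{2k}{\pi}\int_0^\pi\Phi(t)\sin kt\,dt=a_k$.
\end{itemize}
Alternatively, invoke de la Vall\'ee-Poussin's uniqueness theorem: the series converges off $\{0\}$ to a finite integrable function.

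In the sufficiency direction the interchange is easier. Abel summation bounds all partial sums, uniformly in the cutoff, by $CA(\max(1,1/|t|))$, which lies in $L^p$ by your upper-bound computation, so dominated convergence applies.

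There is also a trivial slip in the upper bound. For $x\in(1,\pi]$ you have $N=[1/x]=0$, so $|f(x)|\le CA(1/x)$ would read $|f(x)|\le 0$. Take $N=\max(1,[1/x])$, or use $|f(x)|\le Ca_1$ there.
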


Let $f \in L^p(\mathbb{T}^2)$ and fix $t \in \mathbb{T}$. Denote
\begin{gather*}
I_p(f_t, h) := \left(\int\limits_{\mathbb{T}}
|f(t, x+h) - f(t, x)|^p \, dx \right)^{1/p}, \\
\omega(f_t, \delta)_p := \sup\limits_{h\in\mathbb{R}: |h| \leqslant \delta}
I_p(f_t, h).
\end{gather*}
Note that $\omega(f_t, \delta)_p$ is the modulus of continuity of the function
$f_t(\cdot) := f(t, \cdot) \in L^p(\mathbb{T})$ for almost all $t \in \mathbb{T}$.
In the proof of Theorem~\ref{theorem3} we shall need the following auxiliary
statement.

\begin{lemma}\label{lemma}
Let $p \in [1, +\infty), f\in L^p(\mathbb{T}^2)$. Then for all $\delta>0$,
\begin{equation}\label{lemma_eq}
\int\limits_{\mathbb{T}} \omega(f_t, \delta)_p^p \, dt \leqslant C_p
\omega(f, \delta)_p^p,
\end{equation}
where $C_p$ is a positive constant depending only on $p$.
\end{lemma}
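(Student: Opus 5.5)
The plan is to remove the supremum over $h$ inside the $t$-integral by replacing it with an average over shifts. Once the supremum is gone, Fubini converts the integral over $t$ into a genuine two-dimensional $L^p$-norm of a difference of $f$.

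\textbf{Step 1: pointwise bound in $h$.} Fix $t$ such that $f_t\in L^p(\mathbb{T})$; this holds for almost all $t$ by Fubini. Fix $h$ with $|h|\leqslant\delta$ and any $u\in[0,\delta]$. Write
\begin{equation*}
f(t,x+h)-f(t,x)=\bigl[f(t,x+h)-f(t,x+u)\bigr]+\bigl[f(t,x+u)-f(t,x)\bigr].
\end{equation*}
By the inequality $|a+b|^p\leqslant 2^{p-1}(|a|^p+|b|^p)$ and the translation invariance of $dx$, this gives
\begin{equation*}
I_p(f_t,h)^p\leqslant 2^{p-1}\bigl(I_p(f_t,h-u)^p+I_p(f_t,u)^p\bigr).
\end{equation*}
Average this over $u\in[0,\delta]$. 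Since $|h-u|\leqslant 2\delta$, both resulting averages are dominated by $\frac1\delta\int_{-2\delta}^{2\delta} I_p(f_t,v)^p\,dv$. Hence
\begin{equation*}
\omega(f_t,\delta)_p^p\leqslant \frac{2^{p}}{\delta}\int\limits_{-2\delta}^{2\delta} I_p(f_t,v)^p\,dv ,
\end{equation*}
and the right-hand side no longer depends on $h$.

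\textbf{Step 2: integrate in $t$ and apply Fubini.} Integrate the bound from Step 1 over $t\in\mathbb{T}$ and interchange the order of integration. For each fixed $v$,
\begin{equation*}
\int_{\mathbb{T}} I_p(f_t,v)^p\,dt=\|f(\cdot+(0,v))-f\|_p^p\leqslant \omega(f,2\delta)_p^p .
\end{equation*}
Therefore $\int_{\mathbb{T}}\omega(f_t,\delta)_p^p\,dt\leqslant 2^{p+2}\,\omega(f,2\delta)_p^p$.

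\textbf{Step 3: return from $2\delta$ to $\delta$.} The modulus of continuity is subadditive, so $\omega(f,2\delta)_p\leqslant 2\omega(f,\delta)_p$. This yields \eqref{lemma_eq} with $C_p=2^{2p+2}$.

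\textbf{Main obstacle: measurability.} The only delicate point is that $t\mapsto\omega(f_t,\delta)_p$ must be measurable for the left-hand side of \eqref{lemma_eq} to make sense. For a.e. $t$ the map $h\mapsto I_p(f_t,h)$ is continuous, by continuity of translation in $L^p(\mathbb{T})$. So the supremum can be taken over rational $h$ only, which makes $\omega(f_t,\delta)_p$ a countable supremum of measurable functions of $t$. The joint measurability of $(t,v)\mapsto I_p(f_t,v)^p$, needed for Fubini in Step 2, follows from Tonelli applied to $|f(t,x+v)-f(t,x)|^p$.
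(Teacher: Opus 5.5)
Your proof is correct and follows the paper's route: bound the supremum over $h$ by an average of $I_p(f_t,\cdot)$ over shifts, then integrate in $t$ and use Fubini to bound $\int_{\mathbb{T}} I_p(f_t,v)^p\,dt$ by the two-dimensional modulus of continuity. The difference is that you derive the averaging bound yourself, which forces the interval $[-2\delta,2\delta]$ and the extra step $\omega(f,2\delta)_p\leqslant 2\omega(f,\delta)_p$, whereas the paper cites $\omega(f_t,\delta)_p\leqslant \frac{c}{\delta}\int_0^{\delta}I_p(f_t,h)\,dh$ from Petrushev--Popov and passes to $p$-th powers by H\"older; your measurability remark is a point the paper does not address.
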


\begin{proof}
It is known that for $\delta > 0$ (see, e.g.,~\cite[Lemma~7.2]{PP}),
\begin{equation}\label{PP}
\omega(f_t, \delta)_p \leqslant \frac{c}{\delta}
\int\limits_0^{\delta} I_p(f_t, h) \, dh,
\end{equation}
where $c$ is an absolute positive constant. Then, using~\eqref{PP}, we get
\begin{multline*}
\int\limits_{\mathbb{T}} \omega(f_t, \delta)_p^p \, dt
\leqslant \left( \frac{c}{\delta} \right)^p
\int\limits_{\mathbb{T}} \left( \int\limits_0^{\delta}
I_p(f_t, h) \, dh \right)^p \, dt
\leqslant \frac{c^p}{\delta}
\int\limits_{\mathbb{T}} \int\limits_0^{\delta}
I_p(f_t, h)^p \, dh \, dt \\
= \frac{c^p}{\delta}
\int\limits_0^{\delta}
\int\limits_{\mathbb{T}} I_p(f_t, h)^p \, dt \, dh
\leqslant \frac{c^p}{\delta}
\int\limits_0^{\delta} \omega(f, h)_p^p \, dh
\leqslant c^p \omega(f, \delta)_p^p.
\end{multline*}
The lemma is proved.
\end{proof}

\section{Proof of Theorem~\ref{theorem1}}

We shall apply the key ideas from the proof of Theorem~1 in the continuous
case considered in~\cite{Oskolkov1}, using the specifics of the $L^p$ case
instead of $C$.

Fix $n \in \mathbb{Z}_+$. Consider the quantity
\begin{equation}\label{M_n(f)}
M_n(f, \varphi, x) :=
\max\limits_{0 \leqslant m \leqslant n}
\frac{|f(x) - S_m(f, x)|}{
E_m(f)_p \varphi^{1/p}(E_0(f)_p/E_m(f)_p)}.
\end{equation}
Clearly, in the conclusion of the theorem one can take
\begin{equation*}
C(x) := \sup\limits_{n \in \mathbb{Z}_+} M_n(f, \varphi, x)
= \lim\limits_{n \to \infty} M_n(f, \varphi, x).
\end{equation*}
We estimate the norm $\|M_n(f, \varphi, \cdot)\|_p$. To this end, define
\begin{equation}\label{def_n_k}
n_0 := -1,\quad n_k := \max\{n \in \mathbb{Z}_+:
E_n(f)_p \geqslant 2^{-k} E_0(f)_p\},\quad k \in \mathbb{N}.
\end{equation}
Since $E_n(f)_p$ is non-increasing, the sequence $\{n_k\}_{k=1}^{\infty}$
is non-decreasing.

Fix $k \in \mathbb{Z}_+$ such that $n_{k+1} > n_{k}$, let $T_k$ be a
trigonometric polynomial of best approximation of order $n_k + 1$ for $f$
in $L^p(\mathbb{T})$, and set $g_k(x) := f(x) - T_k(x)$. Then by the
definition~\eqref{def_n_k} of $\{n_k\}_{k=1}^{\infty}$ we have
\begin{gather*}
\| g_k \|_p = E_{n_k+1}(f)_p \leqslant 2^{-k} E_0(f)_p,\quad k \in \mathbb{Z}_+. \\
2^{-(k+1)} E_0(f)_p \leqslant E_n(f)_p \leqslant 2^{-k} E_0(f)_p,\quad
n = n_k+1, \dots,  n_{k+1}.
\end{gather*}
From these inequalities, for any $n \in \{n_k+1, \dots,  n_{k+1} \}$,
\begin{equation}\label{ineq_t_1}
E_n(f)_p \varphi^{1/p}\left(\frac{E_0(f)_p}{E_n(f)_p}\right)
\geqslant 2^{-(k+1)} E_0(f)_p \varphi^{1/p}(2^k)
\geqslant 2^{-1} \|g_k\|_p \varphi^{1/p}(2^k).
\end{equation}

By Theorem~\ref{Hunt}, for every $k \in \mathbb{Z}_+$ we have
$\lim\limits_{n \to \infty} S_n(g_k, x) = g_k(x)$ almost everywhere;
hence, on a set of full measure,
\begin{equation}\label{majorant}
g_k(x) \leqslant S^*(g_k, x).
\end{equation}
Without loss of generality, we may consider points only in the intersection
over all $k \in \mathbb{Z}_+$ of such sets. For any point $x$ in this
intersection, there exists a number $m(x) \in \{0, ..., n\}$ at which the
maximum in~\eqref{M_n(f)} is attained, and choose $k$ so that $m(x) \in \{n_k+1, \dots, n_{k+1}\}$. In particular, for this
$k$ we have $n_{k+1} > n_k$.

For our $n$, define $\overline{k} := \min{\{k:n \leqslant n_{k+1}\}}$.
Then, using the identity
\begin{equation*}
f(x) - S_{m(x)}(f, x) = g_k(x) - S_{m(x)}(g_k, x),
\end{equation*}
the definition~\eqref{M_n(f)} and estimates~\eqref{ineq_t_1},~\eqref{majorant},
we obtain
\begin{multline*}
\|M_n(f, \varphi, \cdot)\|_p^p =
\int\limits_{\mathbb{T}} |M_n(f, \varphi, x)|^p \, dx \\
\leqslant 2^p \sum\limits_{k=0}^{\overline{k}}
\int\limits_{\{x: m(x) \in \{n_k+1, \dots,  n_{k+1}\}\}}
\left(\frac{|g_k(x) - S_{m(x)}(g_k, x)|}{
\|g_k\|_p \varphi^{1/p}(2^k)}\right)^p \, dx \\
\leqslant 4^p \sum\limits_{k=0}^{\overline{k}}
\int\limits_{\mathbb{T}}
\left(S^*\left(\frac{g_k}{\|g_k\|_p \varphi^{1/p}(2^k)}, x \right)\right)^p \, dx.
\end{multline*}
From inequality~\eqref{eq_Hunt} it follows that
\begin{equation*}
\|M_n(f, \varphi, \cdot)\|_p^p
\leqslant 4^p \overline{C}_p
\sum\limits_{k=0}^{\overline{k}} \frac{1}{\varphi(2^k)}.
\end{equation*}
Hence, using monotone convergence theorem, we derive
\begin{multline*}
\|C\|_p^p =
\left\|\sup\limits_{n \in \mathbb{Z}_+}
\frac{|f(\cdot) - S_n(f, \cdot)|}{
E_n(f)_p \varphi^{1/p}(E_0(f)_p/E_n(f)_p)}\right\|_p^p \\
= \left\|\lim\limits_{n \to \infty} M_n(f, \varphi, \cdot)\right\|_p^p
\leqslant \lim\limits_{k \to \infty} 4^p \overline{C}_p
\sum\limits_{k=0}^{k} \frac{1}{\varphi(2^k)}
\leqslant 4^p \overline{C}_p
\sum\limits_{k=0}^{\infty} \frac{1}{\varphi(2^k)}.
\end{multline*}
By the hypothesis of the theorem, the series on the right converges.
Thus the required inequality is proved; in particular, $C(x) \in L^p(\mathbb{T})$.
Theorem~\ref{theorem1} is proved.

\section{Proof of Theorem~\ref{theorem2}}

Before giving the direct proof of the theorem, we describe the main ideas
of the construction. Following K.\,I.~Oskolkov's approach~\cite{Oskolkov1},
we construct the function $F$ as a series of trigonometric polynomials $Q_k$.
For convenience in estimating the best approximations of $F$, we shall
construct the sequence $n_k$, which will be the sequence of degrees of the
polynomials $Q_k$, so that it increases like a geometric progression.
The coefficients of the polynomials $Q_k$ are constructed so that the
$L^p$-norms of the polynomials are uniformly bounded and the deviations of
the partial Fourier sums of certain orders from $Q_k$ are bounded below by the
required amount on sets $\mathcal{H}_k$ of sufficiently large measure.
For the polynomials to satisfy the second requirement, we introduce a sequence
$m_k$ representing the factor $\varphi(\varepsilon_0/\varepsilon_{n_k})$.
By the hypothesis of Theorem~\ref{theorem2}, the sequence
$\varepsilon_{n_k}$ decreases like a geometric progression. Therefore, since
condition~\eqref{condition} fails for $\varphi$, we can use the translation
Theorem~\ref{Borel-Kantelli}, which allows us to arrange the polynomials $Q_k$
so that the union of the translated sets $\mathcal{H}_k$ covers the torus
almost everywhere.

We now begin the construction. For the increasing function $\varphi$, we
construct a non-decreasing function $\psi$ such that
\begin{equation}\label{frac}
\sum\limits_{n=1}^{\infty} \frac{1}{n \psi(n)} = +\infty,\quad
\frac{\psi(n)}{\varphi(n)} \xrightarrow[n \to \infty]{} +\infty.
\end{equation}

Fix $q' > q$ satisfying a condition to be formulated later. Then define the
sequence of natural numbers $\{n_k\}_{k=1}^{\infty}$ as follows: let
$n_1 = 1$, and for $k > 1$ put
\begin{equation*}
n_k := \min{\left\{n \in \mathbb{N}:
\varepsilon_n < 10^{-q'} \varepsilon_{n_{k-1}} \right\}}.
\end{equation*}
By the hypothesis that $\{n^{q'} \varepsilon_n\}_{n=0}^{\infty}$ is
non-decreasing, we obtain
\begin{equation*}
(10 n_k)^{q'} \varepsilon_{10 n_k} \geqslant n_k^{q'} \varepsilon_{n_k},
\end{equation*}
hence
\begin{equation}\label{n_k}
n_{k+1} > 10 n_k, \quad k \in \mathbb{N}.
\end{equation}
We also note that
\begin{equation}\label{eps_n_k}
10^{-q'} \varepsilon_{n_{k+1}} \leqslant \varepsilon_{n}
< 10^{-q'} \varepsilon_{n_k},\quad n \in \{n_{k+1}, ..., n_{k+2}-1 \}.
\end{equation}

Denote
\begin{equation}\label{m_k}
m_1 := 1,\quad m_k := \min{\left(\left\lceil
\psi\left(\frac{\varepsilon_0}{\varepsilon_1} 10^{k q'}\right) \right\rceil,
\left\lfloor \frac{n_k}{8} \right\rfloor \right)} + 1,\quad k \geqslant 2.
\end{equation}
From inequalities~\eqref{eps_n_k} and the condition on $\varphi$, it follows
that $\varphi(\varepsilon_0/\varepsilon_{3n_k}) = o(10^k)$ as $k \to \infty$.
From this and inequality~\eqref{n_k} it follows that
$\varphi(\varepsilon_0/\varepsilon_{3n_k}) = o(n_k)$ as $k \to \infty$.
Therefore, by property~\eqref{frac} and definition~\eqref{m_k},
\begin{equation}\label{m_k_2}
\frac{m_k}{\varphi(\varepsilon_0/\varepsilon_{3n_k})}
\xrightarrow[k \to \infty]{} \infty.
\end{equation}

Consider the polynomials
\begin{equation*}
Q_k(x) := 2 \sin{3n_kx}
\sum\limits_{n=m_k}^{2m_k} \frac{\cos{nx}}{n^{1 - 1/p}},\quad x \in \mathbb{T}.
\end{equation*}
Note that
\begin{equation*}
Q_k(x) =
\sum\limits_{n=m_k}^{2m_k}
\frac{\sin{(3n_k-n)x}}{n^{1 - 1/p}} +
\sum\limits_{n=m_k}^{2m_k}
\frac{\sin{(3n_k+n)x}}{n^{1 - 1/p}}
\end{equation*}
and $3n_k + 2m_k \leqslant 5 n_k$ and $3n_k - 2m_k \geqslant n_k$, whence
\begin{gather}
|Q_k(x) - S_l(Q_k, x)| = Q_k(x),\quad l < n_k,\label{m}\\
|Q_k(x) - S_l(Q_k, x)| = 0,\quad l > 5n_k - 1.\label{5m}
\end{gather}

Define the sets
\begin{multline}\label{A_k_def}
A_k := \left\{x \in \left(0, \frac{\pi}{12m_k}\right]:
|\sin{3n_kx}| \geqslant \frac{\sqrt{3}}{2}\right\} \\
= \left\{x \in \left(0, \frac{\pi}{12m_k}\right]:
|\cos{3n_kx}| \leqslant \frac{1}{2} \right\},\quad k \in \mathbb{N} \setminus \{1\}.
\end{multline}
The set $A_k$ consists of at least $\left\lfloor n_k/(4m_k) \right\rfloor
\geqslant 1$ pairwise disjoint intervals, each of length $\pi/(9 n_k)$.
Therefore,
\begin{equation}\label{mA_k}
mA_k \geqslant \frac{\pi}{72 m_k}.
\end{equation}
For $x \in A_k$, the following inequalities hold:
\begin{multline}\label{estimate2}
|Q_k(x) - S_{3n_k}(Q_k, x)|
= \left|\sum\limits_{n=m_k}^{2m_k}
\frac{\sin{(3n_k+n)x}}{n^{1 - 1/p}} \right| \\
\geqslant |\sin{3n_kx}|
\left| \sum\limits_{n=m_k}^{2m_k}
\frac{\cos{nx}}{n^{1 - 1/p}} \right|
- |\cos{3n_kx}|
\left| \sum\limits_{n=m_k}^{2m_k}
\frac{\sin{nx}}{n^{1 - 1/p}} \right| \\
\geqslant \frac{3m_k}{4(2m_k)^{1 - 1/p}}
- \frac{m_k}{4m_k^{1 - 1/p}} = c m_k^{1/p},
\end{multline}
where $c := 1/8 > 0$.

For natural numbers $k$ and $l$, consider the sets
\begin{gather}
F_k := \left\{ x \in \mathbb{T}:
|Q_k(x) - S_{3n_k}(Q_k, x)| \geqslant c m_k^{1/p} \right\}, \label{F_k}  \\
G_l := \left\{x \in \mathbb{T}:
\left| \sum\limits_{k=l+1}^{\infty}
\varepsilon_{n_k} Q_k(x - x_k) \right| >
\varepsilon_{n_l} \frac{c m_l^{1/p}}{2}\right\},\label{G_l}
\end{gather}
where $\{x_k\}_{k=1}^{\infty}$ is a sequence of real numbers to be defined later.
From inequality~\eqref{estimate2} and definitions~\eqref{A_k_def},~\eqref{F_k}
it follows that $A_k \subset F_k$. Then from estimate~\eqref{mA_k} there exists
a constant $B := \pi/72>0$ such that
\begin{equation}\label{mF_k}
mF_k \geqslant \frac{B}{m_k},\quad k \in \mathbb{N} \setminus \{1\}.
\end{equation}

We use Theorem~\ref{Hardy-Littlewood}, namely the right-hand inequality
in~\eqref{HL}. By this inequality, there exists a constant $C>0$ such that
for every natural number $k \geqslant 2$, the norm of the polynomials $Q_k$
is estimated as follows:
\begin{equation*}
\|Q_k\|_p^p \leqslant C
\sum\limits_{n=m_k}^{2m_k} \frac{1}{n}
\leqslant C \int\limits_{m_k-1}^{2m_k} \frac{1}{x} \, dx
\leqslant C \ln{3}.
\end{equation*}
From this estimate and inequalities~\eqref{eps_n_k}, for all $l \in \mathbb{N}$
we obtain
\begin{equation}\label{Lp}
\left\| \sum\limits_{k=l+1}^{\infty}
\varepsilon_{n_k} Q_k(\cdot-x_k) \right\|_p
\leqslant \sum\limits_{k=l+1}^{\infty}
\varepsilon_{n_k} \| Q_k \|_p \leqslant (C \ln{3})^{\frac{1}{p}}
\sum\limits_{k=1}^{\infty} 10^{-kq'} \varepsilon_{n_l}
= C(p, q') \varepsilon_{n_l},
\end{equation}
where $C(p, q') := \frac{(C \ln{3})^{1/p}}{10^{q'} - 1}$. Choose $q'$ so large
that
\begin{equation*}
C(p, q') \leqslant \frac{c(B/2)^{1/p}}{2}.
\end{equation*}
Then, using Chebyshev's inequality and~\eqref{Lp}, from definition~\eqref{G_l}
we derive
\begin{equation}\label{mG_l}
mG_l \leqslant \frac{B}{2m_l},\quad l \in \mathbb{N}.
\end{equation}

Set $\mathcal{H}_k = F_k \setminus G_k$. From inequalities~\eqref{mF_k},
~\eqref{mG_l}, the definition of $m_k$, and the condition on $\psi$
in~\eqref{frac}, it follows that
\begin{equation*}
\sum_{k=1}^{\infty} m\mathcal{H}_k = +\infty.
\end{equation*}
By Theorem~\ref{Borel-Kantelli}, there exists a sequence $\{ x_k \}_{k=1}^{\infty}$
such that the measure of the set
\begin{equation}\label{Hcal}
\mathcal{H} := \varlimsup\limits_{k \to \infty}\mathcal{H}_k^{x_k}
\end{equation}
is equal to $2\pi$. Define the function $f$ by
\begin{equation*}
f(x) := \sum\limits_{k=1}^{\infty}
\varepsilon_{n_{k}} Q_k(x-x_k).
\end{equation*}
From estimate~\eqref{Lp}, it follows that $f \in L^p(\mathbb{T})$.

Let $n \in \mathbb{N}$ and let $l$ be defined by $n \in \{n_l, ..., n_{l+1}-1\}$.
Then the degree of the polynomial
$\sum\limits_{k=1}^{l-1} \varepsilon_{n_k} Q_k(x-x_k)$ does not exceed
$3n_{l-1} + 2m_{l-1}$, which is bounded above by $10n_{l-1}$, and by
~\eqref{n_k} this does not exceed $n$. Hence, using~\eqref{Lp} and
~\eqref{eps_n_k}, we obtain that there exists a constant $C'(p, q') > 0$
such that
\begin{equation}\label{E_n}
E_n(f)_p \leqslant \left\|f -
\sum\limits_{k=1}^{l-1} \varepsilon_{n_k} Q_k(\cdot-x_k) \right\|_p
\leqslant C'(p, q') \varepsilon_n.
\end{equation}

By the definition~\eqref{Hcal} of the set $\mathcal{H}$, for every $x \in \mathcal{H}$
there exists an infinite sequence of natural numbers $\{ l_i \}_{i=1}^{\infty}$
such that $x \in \mathcal{H}_{l_i}^{x_{l_i}}$ for all natural numbers $i$.
But if $x \in \mathcal{H}_l^{x_l}$, then by~\eqref{m},~\eqref{5m}, and
~\eqref{n_k},
\begin{equation*}
f(x) - S_{3n_l}(f, x) =
\varepsilon_{n_l}\left(Q_l(x-x_l) - S_{3n_l}(Q_l(\cdot - x_l),x) \right)
+ \sum\limits_{k=l+1}^{\infty} \varepsilon_{n_k} Q_k(x-x_k).
\end{equation*}
Then from the definition of $\mathcal{H}_l$, it follows that
\begin{equation}\label{final_t_2}
|f(x) - S_{3n_l}(f, x)| \geqslant \frac{c}{2}
\varepsilon_{3n_l} m_l^{1/p}.
\end{equation}

Set $F(x) = \frac{f(x)}{C'(p, q')}$. Summarising inequality~\eqref{E_n},
estimate~\eqref{final_t_2}, and relation~\eqref{m_k_2}, we obtain that for
all points of $\mathcal{H}$, and hence for almost all $x \in \mathbb{T}$,
the conclusions~\eqref{E_n(f)_cond} and~\eqref{limsup} hold.
Theorem~\ref{theorem2} is proved.

\begin{remark}
Although the general scheme of the proof of Theorem~\ref{theorem2} follows
the work of K.\,I.~Oskolkov~\cite[Theorem~2]{Oskolkov1}, the transfer of the
result from the space of continuous functions to $L^p(\mathbb{T}),\;
1<p<\infty$, required overcoming a number of substantial difficulties related
to the specifics of $L^p$-norms. The main difficulty is the following.
In~\cite{Oskolkov1}, uniformly bounded polynomials on $\mathbb{T}$ were used,
and the behaviour of the polynomials was controlled uniformly on $\mathbb{T}$.
In our case, it was necessary to construct the polynomials $Q_k$ so that their
$L^p$-norms are uniformly bounded, while the deviations
$|Q_k(x) - S_{3n_k}(Q_k,x)|$ on the sets $A_k$ of sufficiently large measure
are of order $m_k^{1/p}$ (estimate~\eqref{estimate2}).
\end{remark}

\section{Proof of Theorem~\ref{theorem3}}

To prove the Prinsheim convergence of double Fourier series of functions
satisfying the required condition on the modulus of continuity, we first use
the scheme of the proof of Theorem~\ref{theorem1}, namely the proof of the
estimate of the rate of convergence of the one-dimensional Fourier series
in terms of the modulus of continuity. Then we use the idea of the proof
of the analogous theorem of K.\,I.~Oskolkov for continuous functions.
However, in our situation, when considering functions from $L^p$, substantial
technical difficulties arise; in particular, we use ideas from the work
of V.\,I.~Kolyada~\cite{Kolyada} on the relation between the modulus of
continuity and its one-dimensional restriction (Lemma~\ref{lemma}).

We first obtain an estimate of the rate of convergence of the Fourier series
of the function $f_t$, where $f_t := f(t, \cdot) \in L^p(\mathbb{T})$.
To this end, set $m_k := 2^{2^k}$ for $k \in \mathbb{N}$. By
condition~\eqref{cond_t_3}, there exists a non-decreasing function
$\theta: [0, +\infty) \rightarrow [0, +\infty)$ such that
$\ln{n} = o(\theta(n))$ as $n \to \infty$ and
\begin{gather}
\sum\limits_{n=1}^{\infty}
\frac{(\omega(f, 1/n)_p \theta(n))^p}{n \ln{n}} < \infty,\label{cond_t_3'} \\
\theta(m_{k+1}) \leqslant 4 \theta(m_k),\quad k \in \mathbb{N}. \label{theta_cond}
\end{gather}

Fix a natural number $n$ and consider the expression
\begin{equation}\label{M_n(f)_2}
M_n(f, x, t) :=
\max\limits_{m_2 < m \leqslant n} |f_t(x) - S_m(f_t, x)| \theta(m),\quad x \in \mathbb{T}.
\end{equation}

We apply the scheme of the proof of Theorem~\ref{theorem1}. Namely, for
$(t, x) \in \mathbb{T}^2$, denote by $m_t(x)$ the natural number in
$(m_2, n]$ at which the maximum in~\eqref{M_n(f)_2} is attained. Let $k$ be
such that $m_t(x) \in (m_k, m_{k+1}]$. Denote
$g_{k, t}(x) := f_t(x) - T_{k, t}(x)$, where $T_{k, t}$ is a trigonometric
polynomial of best approximation of order $m_k + 1$ for $f_t$ in
$L^p(\mathbb{T})$. Then
\begin{equation}\label{diff_eq}
f_t(x) - S_{m_t(x)}(f_t, x) = g_{k,t}(x) - S_{m_t(x)}(g_{k,t}, x).
\end{equation}
By Theorem~\ref{Hunt}, we have
\begin{equation}\label{Hunt_2}
|g_{k,t}(x) - S_{m_t(x)}(g_{k,t}, x)| \le 2 S^*(g_{k,t}, x)
\end{equation}
for almost all $x$. For a natural number $n$, define
$\overline{k} := \min\{k: n \leqslant m_{k+1}\}$. Then, using~\eqref{diff_eq}
and~\eqref{Hunt_2}, we obtain
\begin{multline}\label{M_n_eq}
\|M_n(f, \cdot, t)\|_p^p =
\sum\limits_{k=2}^{\overline{k}}
\int\limits_{\{x: m_t(x) \in (m_k, m_{k+1}]\}}
\left(|f_t(x) - S_{m_t(x)}(f_t, x)| \theta(m_t(x))\right)^p \, dx \\
\leqslant 2^p \sum\limits_{k=2}^{\overline{k}}
\theta^p(m_{k+1}) \|S^*(g_{k, t}, \cdot)\|_p^p
\leqslant 2^p \overline{C}_p
\sum\limits_{k=2}^{\overline{k}}
\theta^p(m_{k+1}) \|g_{k,t}\|_p^p.
\end{multline}
Since $\|g_{k,t}\|_p = E_{m_k+1}(f_t)_p$, using Jackson's inequality and inequality~\eqref{theta_cond}, from~\eqref{M_n_eq}
we derive
\begin{equation*}
\|M_n(f, \cdot, t)\|_p^p \leqslant C
\sum\limits_{k=1}^{\overline{k}-1}
\theta^p(m_k) \omega\left(f_t, \frac{1}{m_{k+1}}\right)_p^p
\end{equation*}
for some $C>0$ independent of $t$. From this and Lemma~\ref{lemma}, it follows that
\begin{equation}\label{int_M_n}
\int\limits_{\mathbb{T}}\|M_n(f, \cdot, t)\|_p^p \, dt
\leqslant C_1
\sum\limits_{k=1}^{\overline{k}-1}
\theta^p(m_k) \omega\left(f, \frac{1}{m_{k+1}}\right)_p^p
\end{equation}
for some $C_1 > 0$. Note that
\begin{multline}\label{ineq_sum}
\sum\limits_{n=m_1}^{\infty}
\frac{(\omega(f, 1/n)_p \theta(n))^p}{n \ln{n}}
= \sum\limits_{k=1}^{\infty}
\sum\limits_{n=m_k}^{m_{k+1}-1}
\frac{(\omega(f, 1/n)_p \theta(n))^p}{n \ln{n}} \\
\geqslant \sum\limits_{k=1}^{\infty}
\theta^{p}(m_k) \omega\left(f, \frac{1}{m_{k+1}} \right)_p^p
\ln{\left(\frac{\ln{m_{k+1}}}{\ln{m_k}}\right)}.
\end{multline}
By the choice of the sequence $m_k$, the expression
\begin{equation*}
\ln{\left(\frac{\ln{m_{k+1}}}{\ln{m_k}}\right)}
\end{equation*}
is bounded below in $k$. Hence, from~\eqref{ineq_sum} and
condition~\eqref{cond_t_3'}, we obtain
\begin{equation}\label{series}
\sum\limits_{k=1}^{\infty}
\theta^p(m_k) \omega\left(f, \frac{1}{m_{k+1}}\right)_p^p < \infty.
\end{equation}
From the definition~\eqref{M_n(f)_2}, inequality~\eqref{int_M_n}, property~\eqref{series}, and monotone convergence
theorem, it follows that for all natural $m > m_2$
and $(t, x) \in \mathbb{T}^2$,
\begin{equation}\label{ineq}
|f_t(x) - S_m(f_t, x)| \leqslant \frac{C(t, x)}{\theta(m)},
\end{equation}
where $C(t, x) \in L^p(\mathbb{T}^2)$ and $\ln{n} = o(\theta(n))$ as
$n \to \infty$.

Let $m_2 < n_1 \leqslant n_2$. Consider the difference
\begin{multline}\label{ineq_t_3}
f(x_1, x_2) - S_{n_1, n_2}(f, x_1, x_2)  \\
= f(x_1, x_2) - \frac{1}{\pi^2}
\int\limits_{\mathbb{T}^2}
D_{n_1}(t_1 - x_1) D_{n_2}(t_2 - x_2) f(t_1, t_2) \, dt_2 \, dt_1 \\
= I_1(x_1, x_2) + I_2(x_1, x_2),
\end{multline}
where $D_{n_i}$ are the Dirichlet kernels for $i \in \{1, 2\}$ and
\begin{gather*}
I_0(t_1, x_2) := f(t_1, x_2) -
\frac{1}{\pi} \int\limits_{\mathbb{T}}
D_{n_2}(t_2 - x_2) f(t_1, t_2) \, dt_2,\\
I_1(x_1, x_2) := \frac{1}{\pi}
\int\limits_{\mathbb{T}}
D_{n_1}(t_1 - x_1) I_0(t_1, x_2) \, dt_1, \\
I_2(x_1, x_2) := f(x_1, x_2) -
\frac{1}{\pi} \int\limits_{\mathbb{T}}
D_{n_1}(t_1 - x_1) f(t_1, x_2) \, dt_1.
\end{gather*}

We estimate the quantities $I_1(x_1, x_2)$ and $I_2(x_1, x_2)$.
To estimate $I_1(x_1, x_2)$, we first estimate $I_0(t_1, x_2)$.
By~\eqref{ineq}, we have
\begin{equation}\label{C_1}
\left| I_0(t_1, x_2) \right| \leqslant
\frac{C(t_1, x_2)}{\theta(n_2)} =: \psi(t_1, x_2),
\end{equation}
where $C(t_1, x_2) \in L^p(\mathbb{T}^2)$. From the representation of the
Dirichlet kernel (see, e.g.,~\cite[Ch.~1, \S~32]{Bari}), it follows that
\begin{equation}\label{Dir}
I_1(x_1, x_2) = \frac{1}{\pi}
\int\limits_{\mathbb{T}} \frac{\sin{n_1t_1}}{t_1}
I_0(t_1 + x_1, x_2) \, dt_1 + o(1)
\end{equation}
as $n_1 \to \infty$. Let
\begin{equation*}
\Psi_{x_1}(x, x_2) :=
\int\limits_0^{x} (\psi(x_1 +t_1,x_2) + \psi(x_1 - t_1, x_2)) \, dt_1.
\end{equation*}
Then from~\eqref{Dir} and inequality~\eqref{C_1}, we obtain
\begin{multline*}
|I_1(x_1, x_2)| \leqslant \frac{1}{\pi}
\int\limits_{-\pi}^{\pi} \left|\frac{\sin{n_1t_1}}{t_1} \right|
\psi(x_1+t_1, x_2) \, dt_1 + o(1)\\
\leqslant n_1 \Psi_{x_1}\left(\frac{1}{n_1}, x_2 \right)
+\int\limits_{1/n_1}^{\pi}
\frac{\psi(x_1 + t_1,x_2) + \psi(x_1-t_1, x_2)}{t_1}\, dt_1 + o(1) \\
= \frac{\Psi_{x_1}(\pi, x_2)}{\pi}
+ \int\limits_{1/n_1}^{\pi}
t_1^{-2} \Psi_{x_1}(t_1, x_2) \, dt_1 + o(1) \\
\leqslant \mathcal{M}\psi_{x_2}(x_1)
\left(1 + \int\limits_{1/n_1}^{\pi} t_1^{-1} \, dt \right) + o(1)
\leqslant B \mathcal{M}C_{x_2}(x_1) \frac{\ln{n_1}}{\theta(n_2)} + o(1),
\end{multline*}
where $B := \ln{\pi} + 2$, $\mathcal{M}\psi_{x_2}(x_1)$ for fixed $x_2$
denotes the application of the Hardy--Littlewood maximal function to
$\psi(\cdot, x_2)$, and $\mathcal{M}C_{x_2}(x_1)$ is understood analogously.
Using Theorem~\ref{maximal}, we estimate
\begin{equation*}
\int\limits_{\mathbb{T}^2} |\mathcal{M}C_{x_2}(x_1)|^p \, dx_1 \, dx_2
= \int\limits_{\mathbb{T}} \| \mathcal{M}C_{x_2} \|_p^p \, dx_2
\leqslant A_p \int\limits_{\mathbb{T}^2} |C(x_1, x_2)|^p \, dx_2 \, dx_1
< +\infty.
\end{equation*}
Hence $\mathcal{M}C_{x_2}(x_1)$ is finite for almost all
$(x_1, x_2) \in \mathbb{T}^2$.

Analogously to estimate~\eqref{C_1}, using Lemma~\ref{lemma}, the expression
$|I_2(x_1, x_2)|$ is estimated. Thus, for $n_2 \geqslant n_1$,
expression~\eqref{ineq_t_3} is estimated as follows:
\begin{equation*}
\left| f(x_1, x_2) - S_{n_1, n_2}(f, x_1, x_2) \right|
\leqslant
B \mathcal{M}C_{x_2}(x_1) \frac{\ln{n_2}}{\theta(n_2)}
+  C_1(x_1, x_2) \frac{1}{\theta{(n_1})} + o(1),
\end{equation*}
as $n_1 \to \infty$, where $C_1(x_1, x_2)$ denotes a non-negative function finite
almost everywhere and $\ln{n} = o(\theta(n))$ as $n \to \infty$.
From the last inequality it follows that almost everywhere
\begin{equation*}
\lim\limits_{n_1 \to \infty}
\sup\limits_{n_2 \geqslant n_1}
S_{n_1, n_2}(f, x_1, x_2) = f(x_1, x_2).
\end{equation*}
Carrying out analogous arguments in the case $n_2 < n_1$, we obtain that
almost everywhere
\begin{equation*}
\lim\limits_{n_2 \to \infty}
\sup\limits_{n_1 > n_2}
S_{n_1, n_2}(f, x_1, x_2) = f(x_1, x_2).
\end{equation*}
Combining the two last limiting relations, we conclude that
\begin{equation*}
\lim_{\min{\{n_1, n_2\}} \to \infty}
S_{n_1, n_2}(f, x_1, x_2) = f(x_1, x_2)
\end{equation*}
for almost all $(x_1, x_2) \in \mathbb{T}^2$. Theorem~\ref{theorem3} is proved.

\medskip
The author expresses his gratitude to N.\,Yu.~Antonov for setting the problem
and for discussions of the work. The author is also grateful to the referee
for valuable comments that improved the text and the results of the paper.



\begin{thebibliography}{99}

\bibitem{Lebesgue}
H.~Lebesgue,
``Sur les int\'egrales singuli\`eres,''
\emph{Ann. Toulouse} \textbf{1} (1909), 25--117.

\bibitem{Bari}
N.\,K.~Bari,
\emph{Trigonometric Series} (in Russian),
Fizmatlit, Moscow, 1961.

\bibitem{Oskolkov1}
K.\,I.~Oskolkov,
``Estimate of the rate of approximation of a continuous function
and its conjugate by Fourier sums on a set of full measure,''
\emph{Izv. Akad. Nauk SSSR Ser. Mat.} \textbf{38} (1974), no.~6, 1393--1407
(in Russian); English transl. in \emph{Math. USSR-Izv.} \textbf{8} (1974), no.~6, 1377--1391.

\bibitem{Oskolkov2}
K.\,I.~Oskolkov,
``Approximative properties of summable functions on sets of full measure,''
\emph{Mat. Sb.} \textbf{103} (1977), no.~4, 563--589 (in Russian);
English transl. in \emph{Math. USSR-Sb.} \textbf{32} (1977), no.~4, 489--514.

\bibitem{Oskolkov3}
K.\,I.~Oskolkov,
``Subsequences of Fourier sums of integrable functions,''
in \emph{Modern Problems of Mathematics. Mathematical Analysis, Algebra, Topology}
(collection of articles dedicated to Academician L.\,S.~Pontryagin on his 75th birthday),
\emph{Trudy Mat. Inst. Steklov} \textbf{167} (1985), 239--260 (in Russian);
English transl. in \emph{Proc. Steklov Inst. Math.} \textbf{167} (1986), 259--282.

\bibitem{Quade}
E.\,S.~Quade,
``Trigonometric approximation in the mean,''
\emph{Duke Math. J.} \textbf{3} (1937), no.~3, 529--543.

\bibitem{Ulyanov}
P.\,L.~Ul'yanov,
``On absolute and uniform convergence of Fourier series,''
\emph{Mat. Sb.} \textbf{72 (114)} (1967), no.~2, 193--225 (in Russian);
English transl. in \emph{Math. USSR-Sb.} \textbf{1} (1967), no.~2, 173--203.

\bibitem{KonStech}
A.\,A.~Konyushkov,
``Best approximations by trigonometric polynomials and Fourier coefficients,''
\emph{Mat. Sb.} \textbf{44 (86)} (1958), no.~1, 53--84 (in Russian);
English transl. in \emph{Math. USSR-Sb.} \textbf{44} (1958), no.~1, 53--84.

\bibitem{Fefferman}
C.~Fefferman,
``On the divergence of multiple Fourier series,''
\emph{Bull. Amer. Math. Soc.} \textbf{77} (1971), no.~2, 191--195.

\bibitem{BahNik}
M.~Bakhbukh and E.\,M.~Nikishin,
``On the convergence of double Fourier series of continuous functions,''
\emph{Sibirsk. Mat. Zh.} \textbf{14} (1973), no.~6, 1189--1199 (in Russian);
English transl. in \emph{Siberian Math. J.} \textbf{14} (1973), no.~6, 824--832.

\bibitem{Bak97}
A.\,N.~Bakhvalov,
``On the divergence everywhere of Fourier series of continuous functions
of several variables,''
\emph{Mat. Sb.} \textbf{188} (1997), no.~8, 45--62 (in Russian);
English transl. in \emph{Sb. Math.} \textbf{188} (1997), no.~8, 1171--1188.

\bibitem{Zhizhiashvili}
L.\,V.~Zhizhiashvili,
``On the convergence of multiple trigonometric Fourier series,''
\emph{Soobshch. Akad. Nauk Gruzin. SSR} \textbf{80} (1975), no.~1, 17--19 (in Russian).

\bibitem{Stokolos}
A.\,M.~Stokolos,
``On the strong differentiation of integrals of functions from H\"older classes,''
\emph{Mat. Zametki} \textbf{55} (1994), no.~1, 84--104 (in Russian);
English transl. in \emph{Math. Notes} \textbf{55} (1994), no.~1, 57--71.

\bibitem{Dya92}
M.\,I.~D'yachenko,
``Some problems in the theory of multiple trigonometric series,''
\emph{Uspekhi Mat. Nauk} \textbf{47} (1992), no.~5, 97--162 (in Russian);
English transl. in \emph{Russian Math. Surveys} \textbf{47} (1992), no.~5, 103--171.

\bibitem{Goginava2026}
U.~Goginava,
``An Oskolkov--Zhizhiashvili Criterion for Rectangular Fourier Sums,''
\emph{arXiv:2606.12920} [math.CA] (2026),
\href{https://arxiv.org/abs/2606.12920}{https://arxiv.org/abs/2606.12920}.

\bibitem{Goginava2026b}
U.~Goginava,
``A Critical-Scale Extension of Zhizhiashvili's Theorem for Rectangular Fourier Series,''
\emph{arXiv:2606.21637} [math.CA] (2026),
\href{https://arxiv.org/abs/2606.21637}{https://arxiv.org/abs/2606.21637}.

\bibitem{Carleson}
L.~Carleson,
``On convergence and growth of partial sums of Fourier series,''
\emph{Acta Math.} \textbf{116} (1966), no.~1, 135--157.

\bibitem{Hunt}
R.~Hunt,
``On the convergence of Fourier series,''
in \emph{Orthogonal Expansions and their Continuous Analogues},
SIU Press, Carbondale, 1968, pp.~235--255.

\bibitem{Zigmund2}
A.~Zygmund,
\emph{Trigonometric Series}, Vol.~2,
Cambridge Univ. Press, Cambridge, 1959;
Russian transl.: Mir, Moscow, 1965.

\bibitem{HL1}
G.\,H.~Hardy and J.\,S.~Littlewood,
``A maximal theorem with function-theoretic applications,''
\emph{Acta Math.} \textbf{54} (1930), 81--116.

\bibitem{Zigmund1}
A.~Zygmund,
\emph{Trigonometric Series}, Vol.~1,
Cambridge Univ. Press, Cambridge, 1959;
Russian transl.: Mir, Moscow, 1965.

\bibitem{HL2}
G.\,H.~Hardy and J.\,S.~Littlewood,
``Notes on the theory of series. XIII: Some new properties of Fourier constants,''
\emph{J. Lond. Math. Soc.} \textbf{6} (1931), 3--9.

\bibitem{PP}
P.\,P.~Petrushev and V.\,A.~Popov,
\emph{Rational Approximation of Real Functions},
Cambridge Univ. Press, Cambridge, 1987.

\bibitem{Kolyada}
V.\,I.~Kolyada,
``$L^p$-moduli of continuity of sections of functions,''
\emph{Forum Mathematicum} \textbf{22} (2010), no.~1, 53--73.

\end{thebibliography}
\end{document}